\newcommand{\br}[3]{{$#1$}$\lower4pt\hbox{$\tp\atop\raise4pt \hbox{$\scriptscriptstyle{#2}$}$} ${$#3$}}
\newcommand{\tw}[3]{{$#1$}${\,\scriptscriptstyle {#2}}\atop\raise9pt\hbox{$\scriptstyle\tp$} ${$#3$}}
\newcommand{\ttps}[2]{{#1}\raise5pt\hbox{$\lower12pt\hbox{$\scriptstyle\tp$}\atop \lower0pt\hbox{$\tilde\;$}$}\raise4.5pt\hbox{${\scriptstyle{#2}}$}}
\newcommand{\st}[1]{\mbox{${\,\scriptscriptstyle {#1}}\atop\raise5.5pt\hbox{$*$}$}}
\newcommand{\rd}[1]{\mbox{${\,\scriptscriptstyle {#1}}\atop\raise5.5pt\hbox{$\bullet$}$}}
\newcommand{\rt}[1]{\otimes_\chi}
\newcommand{\lt}[1]{\mbox{${\,\scriptscriptstyle {#1}}\atop\raise5.5pt\hbox{$\ltimes$}$}}
\newcommand{\btr}{\raise1.2pt\hbox{$\scriptstyle\blacktriangleright$}\hspace{2pt}}
\newcommand{\btl}{\raise1.2pt\hbox{$\scriptstyle\blacktriangleleft$}\hspace{2pt}}
\newcommand{\lcr}{\raise1.0pt \hbox{${\scriptstyle\rightharpoonup}$}}
\newcommand{\rcr}{\raise1.0pt \hbox{${\scriptstyle\leftharpoonup}$}}
\newcommand{\ttp}{{\lower12pt\hbox{$\tp$}\atop \hbox{$\tilde\;$}}}
\newcommand{\id}{\mathrm{id}}
\newcommand{\Bc}{\mathcal{B}}
\newcommand{\Hc}{\mathcal{H}}
\newcommand{\Ru}{\mathcal{R}}
\newcommand{\Uc}{\mathcal{U}}
\newcommand{\Q}{\mathcal{Q}}
\newcommand{\C}{\mathbb{C}}
\newcommand{\Z}{\mathbb{Z}}
\newcommand{\N}{\mathbb{N}}
\newcommand{\tp}{\otimes}
\newcommand{\U}{U}
\newcommand{\Fc}{\mathcal{F}}
\newcommand{\ve}{\varepsilon}
\newcommand{\op}{\oplus}
\newcommand{\la}{\lambda}
\newcommand{\End}{\mathrm{End}}
\newcommand{\Span}{\mathrm{Span}}
\newcommand{\Tr}{\mathrm{Tr}}
\newcommand{\Rm}{\mathrm{R}}
\newcommand{\diag}{\mathrm{diag}}
\newcommand{\ad}{\mathrm{ad}}
\newcommand{\Ad}{\mathrm{Ad}}
\newcommand{\g}{\mathfrak{g}}
\renewcommand{\k}{\mathfrak{k}}
\newcommand{\h}{\mathfrak{h}}
\newcommand{\s}{\mathfrak{s}}
\renewcommand{\o}{\mathfrak{o}}
\newcommand{\eps}{\epsilon}
\newcommand{\nn}{\nonumber}
\newcommand{\p}{\mathfrak{p}}
\renewcommand{\l}{\mathfrak{l}}
\newcommand{\al}{\alpha}
\newcommand{\bt}{\beta}
\newcommand{\be}{\begin{eqnarray}}
\newcommand{\ee}{\end{eqnarray}}
\newtheorem{thm}{Theorem}[section]
\newtheorem{propn}[thm]{Proposition}
\newtheorem{lemma}[thm]{Lemma}
\newtheorem{definition}[thm]{Definition}
\newcommand{\parag}{\advance\prg by1 {\noindent\bf\thesection.\the\prg\hspace{6pt}}}
\begin{document}
\title{Quantum symmetric conjugacy classes of non-exceptional groups}

\author{
Dakhilallah Algethami${}^{\dag,\ddag}$ and Andrey Mudrov${}^{\dag,\sharp}$
\vspace{10pt}\\
\small ${\dag}$ University of Leicester, \\
\small University Road,
LE1 7RH Leicester, UK,
\vspace{10pt}\\
\small ${\ddag}$ University of Bisha, \\
\small
255, Al Nakhil,
67714 Bisha, Saudi Arabia,
\vspace{10pt}\\
\small
${\sharp}$ Moscow Institute of Physics and Technology,\\
\small
9 Institutskiy per., Dolgoprudny, Moscow Region,
141701, Russia,
\vspace{10pt}\\
\small
 e-mail: daaa3@leicester.ac.uk,  am405@le.ac.uk
}
\date{ }

\maketitle

\begin{abstract}
We evaluate one-dimensional representations of quantum symmetric conjugacy classes of classical matrix  groups
along with their quantum stabilizer subgroups.
\end{abstract}
\begin{center}
\end{center}
{\small \underline{Key words}: symmetric spaces, conjugacy classes, quantization, quantum symmetric pairs }
\\
{\small \underline{AMS classification codes}: 17B10, 17B37, 53D55.}
\newpage

\section{Introduction}

\pagenumbering{arabic}
For a geometric space $X$ equipped with a left action of a group $G$, every point $a\in X$ generates an orbit $Ga\subset X$.
Such an orbit is a homogeneous $G$-space isomorphic to $G/K$ where $K\subset G$ is the isotropy subgroup of $a$.
Thus the point $a$ defines an equivariant homomorphism
$$ \iota_a\colon \Fc(X)\hookrightarrow \Fc(G/K)\simeq \Fc(G)^K\subset \Fc(G)$$
of  function algebras realizing $\Fc(Ga)$ as a subalgebra of $K$-invariants in $\Fc(G)$. Algebraically the $G$-action is dualized
as a coaction $\delta\colon \Fc(X)\to \Fc(G)\tp \Fc(X)$.  The point $a$ defines a
$K$-invariant character $\chi_a\colon \Fc(X)\to \C$ acting by evaluation $\chi_a\colon f\mapsto f(a)$ for  $f\in \Fc(X)$. The  homomorphism $\iota_a$
factorizes to   the composition $(\id\tp \chi_a)\circ \delta$. This simple observation underlies a quantization method  of Poisson-Lie $G$-spaces based on q-analogs of
classical points, \cite{DM0}. Although this approach is not the most general,
it has many attractive features because of its quasi-classical nature.

A restriction to the method  is that a quantized function algebra $\Fc_q(X)$ has typically very few
one-dimensional representations, if any. In the classical limit, they turn to  points where the Poisson bivector vanishes.
If such a point is quantized to a character of $\Fc_q(X)$,
then $\Fc_q(G/K)$ can be realized as a subalgebra in $\Fc_q(G)$ similarly to the classical case.
In this paper we are concerned with the case when $X$ is the group space $G$ with the conjugation action on itself, and the orbits are conjugacy classes of $G$.
We are interested in the classes of elements whose square is in the center of $G$. They are isomorphic to complexified symmetric spaces \cite{Hel}.

A more general approach to quantization that  is special to closed conjugacy classes in a simple algebraic group $G$ does not rely on quantum points.
The quantized polynomial ring $\C_q[O]$ of a semi-simple class $O\subset G$ is formulated in terms of generators and relations, as a quotient of a  quantized
affine ring  $\C_q[G]$ by an invariant ideal that is the kernel of a certain representation. Note that $\C_q[G]$
  is not a Hopf algebra dual to quantized universal enveloping of the Lie algebra $\g=\mathrm{Lie}(G)$; it is a different quantization of the coordinate ring $\C[G]$.

It is a relatively rare case when the two approaches can be applied simultaneously.
Fortunately symmetric conjugacy classes do have quantum points.
We construct them  for  groups from the four infinite series, along with their quantum
isotropy subgroups in the total quantum group. This way we match the two  approaches: via generalized parabolic Verma modules and quantum symmetric pairs
\cite{Let}.

To be  specific, let $G$ be the complex general linear, orthogonal, or symplectic  Poisson group relative to the standard Drinfeld-Sklyanin Poisson bracket determined
by the standard classical r-matrix
and let $U_q(\g)$ be the corresponding Drinfeld-Jimbo quantum group \cite{D,ChP}.
There is another Poisson bracket on $G$ that makes it    a Poisson-Lie manifold under the conjugation action \cite{STS}.
It is also cooked up from the same classical r-matrix but in a different way. Quantization of $\C[G]$ along this bracket is
a $U_q(\g)$-module algebra $\C_q[G]$ that is related with the so-called reflection equation, see (\ref{RE}) below.
The Poisson bivector on $G$ is tangent to every conjugacy class making it a Poisson-Lie manifold over $G$, \cite{AlM}.

 Equivariant quantization of semi-simple conjugacy  classes with respect to an action of $U_q(\g)$
is constructed in
\cite{M1, M2} via a representation  of $\C_q[G]$ by linear operators on certain $U_q(\g)$-modules of highest weight (pseudo-parabolic Verma modules).
The quotient of $\C_q[G]$ by the kernel of the representation is a quantized polynomial ring of the class.
 This way  quantum conjugacy classes are   treated as subvarieties of the quantum algebraic group $G$.
If symmetric, they have one-dimensional representations of our interest.

Let  $\Ru$ be a universal R-matrix of $U_q(\g)$ and $V$ the defining (natural) fundamental representation.
Denote by $\Q$ the image of  the "exponential split-Casimir" $\Ru_{21}\Ru$
in $\End(V)\tp U_q(\g)$.
Denote by  $R$ the   image of $\Ru$ in $\End(V)\tp \End(V)$ and
by  $\varpi$ the invariant projector onto the trivial one-dimensional sub-representation in $V\tp V$ for orthogonal and symplectic $\g$.
The algebra $\C_q[G]$ of (the connected component of) a  classical  matrix group $G\subset \End(\C^N)$
is generated by $\Q_{ij}$, $i,j=1,\ldots,N$, modulo a $U_q(\g)$-invariant ideal, which
validates  reflection equation (RE) \cite{KSS}
\begin{equation}
R_{21}\Q_1 R_{12}\Q_2=\Q_2 R_{21}\Q_1 R_{12}.
\nn
\end{equation}
This is a deformation of commutativity of matrix coordinate functions.

For the orthogonal and symplectic quantum groups,  $\Q$ also satisfies
\begin{equation}
\Q_2 S_{12} \Q_2 \varpi = \ve q^{-N+\ve}\varpi =\varpi\Q_2 S_{12} \Q_2,
\nn 
\end{equation}
where $\ve=+1$ if $\g$ is  orthogonal and $\ve=-1$ if $\g$ is symplectic. The braid matrix $S$ is the "quantum permutation" $PR$, where $P$ is the ordinary
flip of tensor factors in  $V\tp V$. For the special quantum linear group one has to assume
a unit quantum determinant but that will be automatically done once we impose other conditions of the class:
the eigenvalues of its matrices along with multiplicities, which are class  invariants.
 That will be done   by fixing roots of  the minimal polynomial on $\Q$
and values of q-traces, see below.

The relations on the entries of the matrix $\Q$ define an associative algebra of function of conjugacy classes $\C_q[O]$.
 Technically, finding one-dimensional representations  of this algebra  means that the entries $\Q_{ij}$ are sent to numbers
 producing a numerical matrix $A$ satisfying all the relations of  $\C_q[O]$.

Remark that there are various types of  reflection equation in the literature.
So, a different version was considered  \cite{Nou}. There is no obvious relation
of that version to the  one  considered in this paper although there is a certain correspondence amongst the solutions.
It is the present version of RE  that is quantizing the Poisson structure of concern.

 In the classical limit $q\rightarrow 1$, the matrix $A$ turns to a matrix  $A_0\in O\subset G$ where the Poisson bracket vanishes.
 Let $K\subset G$ denote its    centralizer subgroup and consider $O$ as the coset space $G/K$.
  Then the function algebra   $\C[G/K]$ is a subalgebra of $K$-invariants in $\C[G]$.
  On the other hand, $O$ is a subvariety in $G$, so the function algebra   $\C[O]$ is a quotient algebra of $\C[G]$, that yields  a different realization of the same space.

Although the projection $\C_q[G]\to \C_q[O]$ does always exist in contrast with the embedding $\C_q[G/K]\subset \C_q[G]$,
the quantum stabilizer subgroup $\C_q[K]$ is very often absent.
 However if $\C_q[O]$ has  a quantum point, the universal enveloping algebra $U(\k)$  centralizing   $A_0$
 is quantizable as a coideal subalgebra $U_q(\k)\subset U_q(\g)$. The general theory of
 coideal subalgebras was developed by G. Letzter in \cite{Let}. We specialize that  construction
  to symmetric conjugacy classes  and present  expressions for  generators of $U_q(\k)$.

\section{Basics of quantum groups}
In this paper  $\g$ is a simple matrix Lie algebra of classical type  with a fixed Cartan subalgebra $\h\subset \g$
represented by diagonal matrices.

Fix an invariant inner product on $\g$, restrict it to $\h$ and transfer it to $\h^*$ by duality.
The root system is expressed through an orthonormal  basis $\{\ve_i\}_{i=1}^n\subset  \h^*$  if $\g$ is orthogonal or symplectic and $\{\ve_i\}_{i=1}^{n+1}$
in the case of $\g=\s\l(n+1)$.
A basis $\Pi$ of  simple positive roots is
$$
\Pi =\{\al_i=\ve_i-\ve_{i+1}:1\leq i\leq n\},\quad \g=\s\l(n+1),
$$
$$
\Pi =\{\al_i=\ve_i-\ve_{i+1}:1\leq i<n\}\cup\{\al_n=\ve_n\},\quad \g=\s\o(2n+1),
$$
$$
\Pi=\{\al_i=\ve_i-\ve_{i+1}:1\leq i<n\}\cup\{\al_n=\ve_{n-1}+\ve_n\},\quad \g=\s\o(2n),
$$
$$
\Pi=\{\al_i=\ve_i-\ve_{i+1}:1\leq i<n\}\cup\{\al_n=2\ve_n\},\quad \g=\s\p(2n).
$$

A total ordering on a set of positive roots  $ \Rm^+ \supset\Pi$ of $\g$ is called  normal if every positive root  split in the sum $\mu +\nu$  with $\mu, \nu \in \Rm^+$ is between $\mu$ and $\nu$.

We assume that a non-zero scalar deformation parameter $q\in \C$  is  not a root of unity.
The quantum group $U_q(\g)$ is a $\C$-algebra generated by $e_{\al_{i}}=e_i$, $f_{\al_{i}}=f_i$, and $q^{\pm h_{\al_i}}=q^{\pm h_i}$ subject to the following relations, \cite{ChP}.

\begin{enumerate}

\item [(i)]	$q^{h_{i} }  e_{j }  q^{-h_{i} }=q^{(\alpha_i,\alpha_j ) } e_{j }$,
\item [(ii)]$q^{h_{i} }  f_{j }  q^{-h_{i }}=q^{-(\alpha_i,\alpha_j ) } f_{j }$,
\item [(iiia)] $ e_{i }f_{j }-f_{j }e_{i }=\delta_{ij}\frac{q^{h_{i} }-q^{-h_{i} }}{q-q^{-1}}$, unless $\g=\s\p(2n)$ and $i=n$,
\item [(iiib)] $ e_{n }f_{j }-f_{j }e_{n }=\delta_{ij}\frac{q^{h_{n} }-q^{-h_{n} }}{q^2-q^{-2}}$, if $\g=\s\p(2n)$,
\item [(iv)] the generators $q^{\pm h_i}$  commute with each other and satisfy $q^{-h_i}q^{h_i}=q^{ h_i}q^{- h_i}=1$,
\item [(v)]  the generators $e_i,\ f_i$ are subject to the quantum q-Serre relations whose exact formulation can be found in \cite{ChP}.
\end{enumerate}
Note that our normalization of  the negative generators simplifies the basic representation assignment in Section \ref{SecIsotropy}.
This results in a difference  from the standard definition by the commutator $[e_n,f_n]=\frac{q^{h_{n} }-q^{-h_{n} }}{q^\frac{1}{2}-q^{-\frac{1}{2}}}$
for $\g=\s\o(2n+1)$.

 We use shortcuts  $[z]_q=\frac{q^z-q^{-z}}{q-q^{-1}}$ for $z\in \h+\C$, and $[x,y]_a=xy-ayx$, where $x, y \in U_q(\g)$, and $a$ is a scalar.

A Hopf algebra structure is fixed by  comultiplication on the generators as
$$\Delta(e_i)= q^{h_i }\otimes e_i+e_i\otimes 1,\quad \Delta(f_i)=f_i\otimes q^{-h_i }+1\otimes f_i,\quad \Delta(q^{\pm h_i })=q^{\pm h_i}\otimes q^{\pm h_i}.$$
The counit $\eps$ is a homomorphism $U_q(\g)\to \C$ that is zero on all $e_i$, $f_i$ and returns $1$ on $q^{\pm h_i}$.
The antipode $\gamma$ can be readily evaluated on the generators from the comultiplication.

Let  $\{v_i\}_{i=1}^N$ be the
standard weight basis of the natural $U_q(\g)$-module $\C^N$ of minimal dimensions.
Each $v_i$ carries the weight $\ve_i$, under the  convention $\ve_{i'}=-\ve_i$, where we assume  $i'=N-i+1$,
if $\g$ is symplectic or orthogonal.

By $R \in \mathrm{End}(\C^N\otimes \C^N)$ we denote the image of  the universal R-matrix of $U_q(\g)$ in the natural representations.
Up to a scalar multiplier, it is expanded in  the standard matrix basis $\{e_{ij}\}_{i,j=1}^N\subset  \End(\C^N)$ as  in \cite{FRT}:
\begin{equation}
R=q\sum_{i=1}^Ne_{ii}\otimes e_{ii}+\sum\limits_{\substack{i,j=1\\ i\neq j }}^Ne_{ii}\otimes e_{jj}+(q-q^{-1})\sum\limits_{\substack{i,j=1\\ i<j }}^Ne_{ji}\otimes e_{ij},
\label{R-mat-1}
\end{equation}
for   $\g=\s\l(N)$ and
\begin{equation}
R=\sum_{i,j=1}^N q^{\delta_{ij}-\delta_{ij'}}e_{ii}\otimes e_{jj} +(q-q^{-1})\sum\limits_{\substack{j,i=1\\ j<i}}^{N} (e_{ij}\otimes e_{ji} -\kappa_i\kappa_jq^{\rho_i-\rho_j} e_{ij}\otimes e_{i'j'}),
\label{R-mat}
\end{equation}
for symplectic and orthogonal $\g$. Here
 $\rho$ is the half-sum of all positive roots
 and $\rho_{i}=(\rho,\ve_i)$.
All $\kappa_j$ equal $1$  if $\g=\s\o(N)$. In the case of  $\g=\s\p(N)$,    $\kappa_j=1$ when $j\leq \frac{N}{2}$ and $\kappa_j=-1$ otherwise.

Define a matrix
$S = P R$, where  $P\colon \C^N\tp \C^N\to \C^N\tp \C^N$ is the permutation of tensor factors, $P (v\otimes w)=w\otimes v, \ v,w\in \C^N$.
It  satisfies the braid identity$$S_{12}S_{23}S_{12} = S_{23}S_{12}S_{23},$$
with $S_{12} = S \otimes 1$ and $S_{23} = 1 \otimes S$.

A  matrix   $A\in \mathrm{End}(\C^N )$ is said to be
a solution of the (numerical) reflection equation if
\begin{equation}
SA_2SA_2 = A_2SA_2S.
\label{4}
\end{equation}
It is then a one-dimensional representation, $\Q_{ij}\mapsto A_{ij}$, of a quantum matrix algebra whose entries $\Q_{ij}$ are subject to the relation (\ref{RE}).
We will seek for $A$ that also satisfies  the relation  (\ref{OC}) (in the orthogonal and symplectic case) as well as  other identities of quantum symmetric conjugacy classes
listed in the next section.

\section{Points in quantum
symmetric conjugacy classes }
\subsection{Symmetric conjugacy classes and their quantization}
\label{secQuantSymCl}
We call a semisimple conjugacy class $O$ of a simple complex algebraic group $G$ symmetric if
conjugation $\Ad_s$ with $s\in O$ is an involution, that is,  if  $s^2$ is in the center of $G$. Such a class is  a complexification of
a Riemannian symmetric space.

We will categorize symmetric classes of a particular group by the order of its elements, $\ell$, which family we denote by T$\ell$.
It is sufficient to consider only classes of types T2 and T4 because the only case of $SL(N)$ when $\ell\not =2$ divides $2N$ can be readily reduced to T2.

The type T2 comprises AIII, BDI, and CII in the Cartan classification \cite{Hel}.
Classes from T4 include $SO(N)/GL\left(\frac{N}{2}\right)$ and $SP(N)/GL\left(\frac{N}{2}\right)$ (DIII and respectively CI)  with even $N$.

Quantization of the polynomial ring on a conjugacy class passing through a point $t$ in the maximal torus $T\subset G$
can be faithfully represented as a subalgebra of linear endomorphisms of an irreducible  $U_q(\g)$-module of highest weight that depends on $t$.
It is called base weight and it is constructed as follows.

Let $\k\subset \g$ be the Lie subalgebra centralizing $t$. The triangular decomposition of $\g$
induces triangular decomposition of $\k=\k_-\op \h\op \k_+$ with $\k_\pm \subset \g_\pm$.
The root system of $\k$ is naturally identified with a root subsystem in $\Rm=\Rm_\g$, which we denote by $\Rm_\k$,
so that $\Rm^+_\k\subset\Rm^+_\g$ (but generally  $\Pi_\k\not \subset \Pi_\g$).

Set $\kappa$ equal to the half-sum of positive roots of $\k$, $\kappa=\frac{1}{2}\sum_{\al\in \Rm^+_\k}\al$.
Define a base weight $\la\in \h^*$    by the assignment
$$
q^{\la}\colon
q^{h_\al}\mapsto q^{(\la,\al)}= \pm \sqrt{\al(t)} q^{(\kappa-\rho,\al)},\quad  \al \in \Pi,
$$
where  a root $\al$ is regarded as a multiplicative character  of the maximal torus, $\al\colon T\to \C$, and
the signs are chosen arbitrarily.

The Verma module $V_\la$ has  submodules
$V_{\la-\al}$ of highest weight $\la-\al$ for each  $\al\in \Pi_\k$. The quotient $M_\la=V_\la/\sum_{\al \in \Pi_\k}V_{\al-\la}$ called base module
 is irreducible for all but may be  a finite number of values of $q$ away from a root of unity.

Let $V$ be the natural representation of $U_q(\g)$ and $\Q$ be the image of $\Ru_{21}\Ru$ in $\End(V)\tp U_q(\g)$.
The subalgebra in $U_q(\g)$ generated by the matrix entries $\Q_{ij}$ is a quantization of the coordinate
ring $\C[G]$ denoted by $\C_q[G]$. It is an $\ad$-invariant subalgebra in $U_q(\g)$ and its image   in $\End(M_\la)$
is an equivariant quantization of the coordinate ring $\C[O]$, where $O$ is the conjugacy class of $t$.

Different choices of the base weight $\la$ corresponding to the same $t$ (as well as different $t$ from the same conjugacy class) yield different base modules but isomorphic quantizations.
The quantized coordinate ring $\C_q[O]$ can be described as a quotient of $\C_q[G]$ by an invariant ideal,
the annihilator of $M_\la$.
This ideal is generated by the entries of the minimal polynomial of $\Q$ of degree 2 and the values of  $q$-trace $\Tr_q(\Q)$ (for
symmetric $O$). They are determined by the initial point $t$ via the base weight $\la$ and depend only on the class $O$.

Below we recall an explicit description of symmetric  classes of connected non-exceptional groups.
Let us fix the following initial points for classes of type T2:
\be
t&=&
\left\{
\begin{array}{cl}
\diag(-1,\ldots,-1,1,\ldots,1)&  \g=\s\l(N),
\\
 \diag(-1,\ldots,-1,1,\ldots,1,-1,\ldots,-1), &  \g=\s\o(N),\>\s\p(N),
\end{array}
\right.
\label{IP1}
\ee
assuming the diagonal symmetric under the inversion $t_{ii}\mapsto t_{i'i'}$ in the symplectic and orthogonal case.
Let the number of $+1$-s equal $P$ and the number of $-1$-s equal $M=N-P \in 2\N$.
Note that the approach of  \cite{M5} delivers quantization of classes only in  connected $G$.

For even orthogonal and symplectic $G$
there are also two Hermitian symmetric classes of type T4 that pass through points
\be
t&=&
\diag(i,\ldots, i,- i,\ldots,- i),
\label{IP2}
\ee
where $i=\sqrt{-1}$. The numbers of $\pm i$-s are the same and equal to $\frac{N}{2}$.

Symmetric conjugacy classes of non-exceptional groups are generated by entries of a matrix $\Q$
subject to
\begin{equation}
R_{21}\Q_1 R_{12}\Q_2=\Q_2 R_{21}\Q_1 R_{12}.
\label{RE}
\end{equation}
For $G$ of types $B,C,D$ they also satisfy the condition
\be
\Q_2 S_{12} \Q_2 \varpi = \ve q^{-N+\ve}\varpi =\varpi\Q_2 S_{12} \Q_2
\label{OC}
\ee
with $\ve=+1$ for the orthogonal groups and $\ve=-1$ for symplectic.

The classes of type T2 satisfy  relations
\begin{equation}
(\Q+q^{-P})( \Q-q^{-M})=0,
\label{IE}
\end{equation}
\begin{equation}
\Tr_q(\Q)=[P]_q-[M]_q,\quad G=SL(N),
\label{TRSL}
\end{equation}
\begin{equation}
\Tr_q(\Q)=[P-1]_q-[M-1]_q,\quad G=O(N),
\label{TRO}
\end{equation}
\begin{equation}
\Tr_q(\Q)=[P+1]_q-[M+1]_q,\quad G=SP(N).
  \label{TRS}
\end{equation}
A general symmetric class  in $SL(N)$ has eigenvalues $\pm e^{\frac{\pi i k}{N}}$ with fixed $k=0,\ldots, N-1$ and the number of minuses equal to $M$.
Its quantization is described by similar formulas as of the type  T2,  where the roots of the minimal polynomial  of $\Q$ and $\Tr_q(\Q)$  should be multiplied
by $e^{\frac{\pi i k}{N}}$.

Classes of type T4 satisfy the relations
\be
(\Q-i q^{-\frac{N}{2}+\ve})( \Q+  i q^{-\frac{N}{2}+\ve})=0,
\label{IE_H}
\ee
\begin{equation}
\Tr_q(\Q)=0.
\label{TROS_H }
\end{equation}
Remark that specializing the value of the (quantum) determinant is redundant because it is fixed automatically by the roots of the minimal
polynomial of $G$.

The above equations follow from  decomposition of the module $\C^N\tp M_\la$
into a direct sum of two irreducible submodules with highest weights $\ve_1+\la$ and $\ve_{M/2}+\la$.
They are  $\Q$-eigenspaces  of eigenvalues $-q^{-P}$ and, respectively, $q^{-M}$,
for type T2 and $\pm iq^{-\frac{N}{2}+\ve}$ for type T4. The $q$-traces are calculated via the formula $\Tr_q(\Q)=\Tr(q^{2h_\la+2h_\rho})$, where the right-hand side
is the ordinary trace in $\End(\C^N)$, see \cite{M4} for details.

As we already noted, quantization via generalized parabolic Verma modules   covers only those conjugacy class\-es which are in the connected component
of the identity. The method of quantum initial points allows to bypass this restriction.

\subsection{Quasi-classical limit of reflection equation}
\label{SecAnsatz}

Let $G$ be a simple complex Lie group and   $\g$ its Lie algebra.
Denote by $T_g(G)$ the tangent space at  $g\in G$ and by $T(G)$ the tangent vector  bundle.
Every element $\xi\in \g$ generates left and right invariant vector fields, $\xi^l_g=g\xi \in  T(G)$, $\xi^r_g=\xi g\in  T(G)$
(in a matrix representation), and an adjoint vector field $\xi^{\ad}_g=\xi^l_g-\xi^r_g$.

Let $\omega\in S^2(\g)$ be the symmetric invariant, unique up to a scalar multiplier.
Suppose that $\varrho\in \g\wedge \g$ is such that  $\mathrm{r}=\varrho+\omega$ satisfies the classical Yang-Baxter equation
$$
[\mathrm{r}_{12},\mathrm{r}_{13}]+[\mathrm{r}_{12},\mathrm{r}_{23}]+[\mathrm{r}_{13},\mathrm{r}_{23}]=0.
$$
The RE Poisson bracket on the group is given by the bivector field
\be
\varrho^{\ad,\ad}+\omega^{r,l}-\omega^{l,r} \in \wedge^2T(G),
\label{RE-Poisson}
\ee
where
the indices $r,l$ designate the corresponding vector fields generated by  tensor factors.
\begin{lemma}
The bivector field $\varphi=\omega^{l,r}-\omega^{r,l}\in \wedge^2 T(G)$ is invariant.
\end{lemma}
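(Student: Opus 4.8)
The plan is to verify invariance directly under the two natural commuting actions on $G$: left translations and right translations. Since the adjoint action is the composition of these, and since $\omega^{l,r}-\omega^{r,l}$ is a difference of bi-vector fields each built from one left-invariant and one right-invariant leg, it is natural to exploit the interplay between the two translation actions together with the invariance of $\omega$.

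First I would fix notation: for $\xi\in\g$ write $\xi^l$ and $\xi^r$ for the left- and right-invariant vector fields, noting the standard bracket relations $[\xi^l,\eta^l]=[\xi,\eta]^l$, $[\xi^r,\eta^r]=-[\xi,\eta]^r$, and $[\xi^l,\eta^r]=0$. Expanding $\omega=\sum_a x_a\otimes y^a$ in a basis, the bi-vector field in question is $\varphi=\sum_a\bigl(x_a^l\wedge y^a{}^r - x_a^r\wedge y^a{}^l\bigr)$, and after reindexing using the symmetry $\omega\in S^2(\g)$ this is $\varphi=\sum_a\bigl(x_a^l\wedge y^a{}^r + y^a{}^l\wedge x_a^r\bigr)$, i.e. $\varphi = \sum_a x_a^l\wedge y^a{}^r - \sum_a x_a^r\wedge y^a{}^l$, a genuinely "mixed" object. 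I would then compute the Lie derivative of $\varphi$ along an arbitrary left-invariant field $\zeta^r$ (generator of the right-translation action) — equivalently test invariance of $\varphi$ under $\mathrm{Ad}$ acting only through the right leg — using $[\zeta^r, x_a^l]=0$ and $[\zeta^r, y^a{}^r]=-[\zeta,y^a]^r$.

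The key step is that the resulting variation equals a bi-vector field whose coefficients are $\sum_a x_a\otimes[\zeta,y^a]$ (one leg left-invariant, one leg right-invariant), which vanishes precisely because $\omega$ is $\ad$-invariant: $\sum_a\bigl([\zeta,x_a]\otimes y^a + x_a\otimes[\zeta,y^a]\bigr)=0$ in $\g\otimes\g$. The same computation with the roles of left and right interchanged handles invariance under left translations; combining the two gives invariance of $\varphi$ under the adjoint action, which is what the lemma asserts in this context. An alternative, perhaps cleaner, route is to observe that $\omega^{l,l}$ and $\omega^{r,r}$ are each separately $G$-invariant (being the symmetric invariant pushed forward by left- resp. right-invariant trivializations of $T(G)$), and that $\omega^{l,r}+\omega^{r,l}$ is also invariant since left- and right-invariant fields commute and $\omega$ is symmetric; then $\varphi=\omega^{l,r}-\omega^{r,l}$ is invariant once one checks it is annihilated by all $\xi^{\ad}=\xi^l-\xi^r$, which reduces again to the $\ad$-invariance identity for $\omega$.

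I do not anticipate a serious obstacle here — this is essentially a bookkeeping computation with invariant vector fields — but the one point requiring care is the sign in $[\xi^r,\eta^r]=-[\xi,\eta]^r$ and the reindexing that uses $\omega\in S^2(\g)$; getting those consistent is what makes the two leftover terms cancel against the $\ad$-invariance of $\omega$ rather than add up. I would therefore present the computation of $\mathcal{L}_{\zeta^r}\varphi$ in full and then simply remark that $\mathcal{L}_{\zeta^l}\varphi$ is handled symmetrically, concluding that $\varphi$ is $\ad$-invariant.
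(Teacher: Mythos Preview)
Your main strategy---showing that $\varphi$ is separately invariant under left translations and under right translations---does not work, because $\varphi$ is invariant under \emph{neither} action taken alone. A quick way to see this: at the identity $e$ one has $\varphi_e=0$ (since $\xi^l_e=\xi^r_e$ for every $\xi$), so if $\varphi$ were, say, left-invariant then $\varphi_g=(L_g)_*\varphi_e=0$ for every $g$, contradicting the very next lemma, which says $\varphi_g$ vanishes only when $\Ad_g^2=\id$. Concretely, if you carry out your computation of $\mathcal{L}_{\zeta^r}\varphi$ using $[\zeta^r,x_a^l]=0$, $[\zeta^r,y^{a\,r}]=-[\zeta,y^a]^r$ and the symmetry of $\omega$, you get
\[
\mathcal{L}_{\zeta^r}\varphi \;=\; -2\sum_a x_a^{\,l}\wedge [\zeta,y^a]^{\,r},
\]
and the $\ad$-invariance identity $\sum_a\bigl([\zeta,x_a]\otimes y^a + x_a\otimes[\zeta,y^a]\bigr)=0$ does \emph{not} kill this: it only says $\sum_a x_a\otimes[\zeta,y^a]=-\sum_a[\zeta,x_a]\otimes y^a$, not that this tensor is zero. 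So your ``key step'' fails.

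What does work infinitesimally is to compute $\mathcal{L}_{\zeta^{\ad}}\varphi$ with $\zeta^{\ad}=\zeta^l-\zeta^r$ \emph{directly}, without splitting into $\zeta^l$ and $\zeta^r$. Then each half of $\varphi$ contributes a term of the shape $\sum_a\bigl([\zeta,x_a]^{\bullet}\wedge (y^{a})^{\circ}+x_a^{\bullet}\wedge[\zeta,y^a]^{\circ}\bigr)$ with \emph{matching} decorations $(\bullet,\circ)\in\{(l,r),(r,l)\}$, and that is the image of $(\ad_\zeta\otimes\id+\id\otimes\ad_\zeta)(\omega)=0$ under a linear map $u\otimes v\mapsto u^{\bullet}\wedge v^{\circ}$, hence zero. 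This is the infinitesimal counterpart of the paper's argument, which is carried out at the group level in one stroke: in the right trivialization $T_g(G)\simeq\g$ one has $\varphi_g=(\Ad_g\otimes\id-\id\otimes\Ad_g)(\omega)$, conjugation by $a$ acts as $\Ad_a\otimes\Ad_a$, and the invariance $(\Ad_a\otimes\Ad_a)(\omega)=\omega$ immediately gives $(\Ad_a\otimes\Ad_a)\varphi_g=\varphi_{aga^{-1}}$. The cancellation that makes this work is exactly the one that is lost when you treat the left and right translations separately.
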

\begin{proof}
Denote by $g\mapsto \Ad_g$ the adjoint representation on $\g$.
For each $g\in G$, identification of  $T_g(G)$  with $\g$ by right translations, $\g g\simeq T_g(G)$,
  takes  the bivector $\varphi_g $ to  $(\Ad_g\tp \id)(\omega)-(\id \tp \Ad_g)(\omega)$.
The conjugation
 $a\colon g\mapsto a g a^{-1}$ induces  an operator   on $\wedge^2 T_g(G)$ that sends $\varphi_g$ to
$$
(\Ad_{ag}\tp \Ad_a)(\omega)-(\Ad_a \tp \Ad_{ag})(\omega)=(\Ad_{aga^{-1}}\tp \id)(\omega)-(\id \tp \Ad_{aga^{-1}})(\omega)=\varphi_{aga^{-1}},
$$
because $\omega$ is $\Ad$-invariant.
\end{proof}
The next lemma implies that the Poisson bracket simplifies on symmetric conjugacy classes.
\begin{lemma}
  The bivector field $\omega^{l,r}-\omega^{r,l}\in \wedge^2 T(G)$ turns zero at every $g\in G$ such that $\Ad_g^2=\id$.
\end{lemma}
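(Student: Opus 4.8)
The plan is to mimic the computation in the previous lemma, but now exploit the extra symmetry coming from $\Ad_g^2=\id$. As before, fix $g\in G$ and identify $T_g(G)$ with $\g$ by right translations, so that the bivector $\varphi_g=\omega^{l,r}_g-\omega^{r,l}_g$ becomes
$$
\Phi_g:=(\Ad_g\tp \id)(\omega)-(\id\tp \Ad_g)(\omega)\in \g\wedge\g .
$$
I want to show $\Phi_g=0$ whenever $\si:=\Ad_g$ is an involution. Write $\omega=\sum_k x_k\tp y_k$ with the understanding that $\omega$ is symmetric, so $\sum_k x_k\tp y_k=\sum_k y_k\tp x_k$. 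Then
$$
\Phi_g=\sum_k \bigl(\si(x_k)\tp y_k-x_k\tp\si(y_k)\bigr).
$$

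The key step is to use $\Ad$-invariance of $\omega$ together with $\si^2=\id$. Invariance of $\omega$ under $\si=\Ad_g$ means $(\si\tp\si)(\omega)=\omega$, i.e. $\sum_k\si(x_k)\tp\si(y_k)=\sum_k x_k\tp y_k$. Apply $\id\tp\si$ to this identity: using $\si^2=\id$ on the second leg we get $\sum_k \si(x_k)\tp y_k=\sum_k x_k\tp\si(y_k)$. This is exactly the statement that the two terms of $\Phi_g$ coincide, hence $\Phi_g=0$. Transporting back via right translation, $\varphi_g=0$. Since $g$ with $\Ad_g^2=\id$ was arbitrary, the bivector field $\omega^{l,r}-\omega^{r,l}$ vanishes at every such point, which is the claim.

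Two small points deserve a remark in the write-up. First, one should phrase the manipulation $(\si\tp\si)(\omega)=\omega\Rightarrow(\si\tp\id)(\omega)=(\id\tp\si)(\omega)$ invariantly rather than through a choice of dual bases, so that no finite-dimensionality or choice of basis is implicitly invoked beyond what is standard; this is just the observation that applying $\id\tp\si^{-1}=\id\tp\si$ to the invariance relation rearranges the symmetric tensor correctly. Second, one can note in passing that $\Ad$-invariance of $\omega$ alone already gives $(\si\tp\id)(\omega)=(\id\tp\si^{-1})(\omega)$ for any $\si=\Ad_g$, so the content of the lemma is precisely the collapse $\si^{-1}=\si$; this also explains transparently why the generic (non-involutive) case does not vanish, consistent with the previous lemma.

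I do not expect a genuine obstacle here: the statement is essentially formal once one writes $\varphi_g$ in the right-translated picture and uses the two algebraic facts ($\Ad$-invariance and $\si^2=\id$). The only thing to be careful about is bookkeeping of which tensor leg $\Ad_g$ acts on when passing from the vector-field formulation $\omega^{l,r}-\omega^{r,l}$ to $\Phi_g$, so that the signs and the roles of the two legs match those fixed in the proof of the preceding lemma; reusing that lemma's identification verbatim keeps this consistent.
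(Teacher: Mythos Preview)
Your proof is correct and follows essentially the same approach as the paper: identify $\varphi_g$ with $(\Ad_g\tp\id)(\omega)-(\id\tp\Ad_g)(\omega)$ via right translation, then combine $\Ad$-invariance of $\omega$ with $\Ad_g^2=\id$ to conclude the two terms coincide. The paper's one-line version rewrites $(\id\tp\Ad_g)(\omega)=(\Ad_g^2\tp\Ad_g)(\omega)=(\Ad_g\tp\id)(\omega)$, which is the same manipulation you perform by applying $\id\tp\si$ to the invariance relation.
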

\begin{proof}
Since $\Ad_g^2=\id$, we get
$$
\varphi_g=(\Ad_g\tp \id ) (\omega)-(\id \tp \Ad_g) (\omega)
=(\Ad_g\tp \id ) (\omega)-(\Ad_g^2 \tp \Ad_g) (\omega) =0,
$$
by the invariance of $\omega$.
\end{proof}
\noindent
It follows from this lemma that the bivector field generated by $\varrho$ is Poisson. This is a special case of Proposition 2.1 from \cite{DS}.

Now fix a triangular decomposition $\g=\g_-\op \h\op \g_+$ with the Cartan subalgebra $\h$ and assume
that $\varrho$ is the standard classical skew symmetric r-matrix. It is  the anti-symmetrized
inverse Killing pairing $\g_-\tp \g_+ \to \C$,
$$
\varrho=\sum_{\al\in \Rm^+} e_\al \wedge f_\al, \quad (e_\al,f_\al)=1, \quad \forall \al \in \Rm^+.
$$
Denote by  $T\subset G$   the maximal torus that corresponds to $\h$.
Suppose that $a\in N(T)\subset G$, the normalizer of the torus and let $\theta$ be the corresponding element of the Weyl group, $\theta^2=\id$.

Conjugation with $a$ takes the root space $\g_\al$   to   $\g_{\theta(\al)}$.
Denote by $\k$ the subalgebra of $\theta$-fixed points and by $\l\subset \k$ the semisimple subalgebra
generated by simple root vectors $e_\al,f_\al$ with $\theta(\al)=\al$.

Suppose that $\Rm$ splits to a disjoint union $\Rm_\l\cup \Phi^+\cup \Phi^-$, where $\Phi^\pm\subset \Rm^\pm$ and $\theta \Phi^+=\Phi^-$.
We call such points $a\in G$ admissible.
Then $-\theta$ preserves $\Phi^+$ and we denote $\tilde \al=-\theta(\al)$ for all $\al\in \Phi_+$.
Every symmetric conjugacy class has an admissible point, \cite{Di}, Lemma 11.1.16.
\begin{propn}
\label{class_points}
The Poisson bivector (\ref{RE-Poisson}) is vanishing at admissible $a$.
\end{propn}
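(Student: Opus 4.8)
The plan is to decompose the RE Poisson bivector (\ref{RE-Poisson}) at the point $a$ into three pieces and show each vanishes. By the previous lemma applied to $g=a$ (which satisfies $\Ad_a^2=\id$ since $\theta^2=\id$ and $a\in N(T)$), the term $\omega^{l,r}-\omega^{r,l}$ already vanishes at $a$. So it remains to show that the single bivector $\varrho^{\ad,\ad}$ vanishes at $a$. Using the right-translation identification $\g g\simeq T_g(G)$, the adjoint field $\xi^{\ad}_g$ corresponds to $\xi-\Ad_g(\xi)\in\g$, so at $g=a$ the bivector $\varrho^{\ad,\ad}$ is carried to $(\id-\Ad_a)^{\wedge 2}(\varrho)\in\g\wedge\g$, and the claim becomes the purely Lie-algebraic statement that $\varrho$ is annihilated by $(\id-\Ad_a)\tp(\id-\Ad_a)$, equivalently that $\varrho$ lies in the subspace spanned by tensors with at least one $\theta$-fixed leg.

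First I would write $\varrho=\sum_{\al\in\Rm^+}e_\al\wedge f_\al$ and split the sum according to the decomposition $\Rm=\Rm_\l\cup\Phi^+\cup\Phi^-$ of the admissibility hypothesis; since $\varrho$ only involves positive roots, only $\Rm_\l^+$ and $\Phi^+$ contribute, giving $\varrho=\varrho_\l+\varrho_\Phi$ with $\varrho_\l=\sum_{\al\in\Rm_\l^+}e_\al\wedge f_\al$ and $\varrho_\Phi=\sum_{\al\in\Phi^+}e_\al\wedge f_\al$. For the first piece: on $\l$ the map $\theta$ acts as the identity (by definition $\l$ is generated by simple root vectors fixed by $\theta$, so $\theta|_\l=\id$ and thus $\Ad_a$ fixes every $e_\al,f_\al$ with $\al\in\Rm_\l$), hence each factor $(\id-\Ad_a)$ kills both legs and $\varrho_\l$ is annihilated. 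For the second piece I would use that $\theta$ sends $\Phi^+$ bijectively to $\Phi^-$, so $\Ad_a(e_\al)\in\g_{\theta(\al)}$ with $\theta(\al)\in\Rm^-$; writing out $(\id-\Ad_a)\tp(\id-\Ad_a)$ applied to $e_\al\wedge f_\al$ produces terms landing in $\g_{\theta(\al)}\wedge\g_{-\theta(\al)}=\g_{-\tilde\al}\wedge\g_{\tilde\al}$, and the key point is that summing over $\al\in\Phi^+$ these cross-terms reassemble (after using $\tilde\al=-\theta(\al)$ reindexes $\Phi^+$ to itself) into a copy of the analogous expression for $\Phi^-$, which cancels against the original by antisymmetry of the wedge. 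More precisely, one checks that the coefficient normalization $(e_\al,f_\al)=1$ together with invariance of $\omega$ forces $\Ad_a(e_\al)$ and $\Ad_a(f_\al)$ to be dual root vectors for $\theta(\al)$, so the bookkeeping is exact.

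The step I expect to be the main obstacle is the second piece: carefully tracking the scalar factors that appear when $\Ad_a$ moves a root vector $e_\al$ (normalized by the Killing form) to a root vector in $\g_{\theta(\al)}$, since $\Ad_a$ need not send the chosen Chevalley generator $e_\al$ to exactly $e_{\theta(\al)}$ but only to a nonzero multiple of it, and one must verify that the $e$-multiplier and the $f$-multiplier are reciprocal so that the wedge terms pair up and cancel rather than merely spanning the right subspace. This is where the admissibility hypothesis (that $\theta$ cleanly permutes the roots outside $\l$ without fixed points in $\Phi^\pm$) and the $\Ad$-invariance of $\omega$ do the real work; with those in hand the cancellation is forced, and combined with the vanishing of $\omega^{l,r}-\omega^{r,l}$ from the preceding lemma, the full bivector (\ref{RE-Poisson}) vanishes at $a$, proving the proposition.
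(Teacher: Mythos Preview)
Your plan matches the paper's proof: kill the $\omega$-part by the preceding lemma, split $\varrho$ over $\Rm_\l^+$ (where both legs are $\Ad_a$-fixed, so the adjoint field vanishes) and over $\Phi^+$, and cancel the latter via the involution $\al\mapsto\tilde\al$ on $\Phi^+$. The one point you leave implicit is that reciprocity $x_\al y_\al=1$ (your ``dual root vectors'' observation) handles only the $e_\al\wedge f_\al$ versus $f_{\tilde\al}\wedge e_{\tilde\al}$ cancellation; the mixed $f_{\tilde\al}\wedge f_\al$ and $e_\al\wedge e_{\tilde\al}$ terms require the extra symmetry $x_\al=x_{\tilde\al}$ (hence $y_\al=y_{\tilde\al}$), which the paper extracts from orthogonality of $\Ad_a$ via $(\Ad_a e_\al,e_{\tilde\al})=(e_\al,\Ad_a e_{\tilde\al})$.
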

\begin{proof}
 The $\omega$-term vanishes in view of the previous lemma, so we need to prove that $\varrho^{\ad,\ad}$ turns zero at $a$.

Let $x_\al\in \C$ be defined from the equality  $\Ad_a e_\al=x_\al f_{\tilde \al}$   for $\al\in \Phi^+$.
The operator $\Ad_a\in \End(\h^*)$ is orthogonal, then
\be
x_\al=(\Ad_a e_\al, e_{\tilde \al})=(\Ad_a e_\al, \Ad_a^2 e_{\tilde \al})=(e_\al, \Ad_a e_{\tilde \al})=x_{\tilde \al}
\label{al-tilde-al}
\ee
for all such $\al$. We have also $\Ad_a f_\al=y_\al e_{\tilde \al}$    for $\al\in \Phi^+$, and $x_\al y_{\tilde \al}=1$ because  $\Ad_a$ is an involution.
On identification of $T_a(G)$ with $\g$ via right shift by $a$ we get $\xi_a^{\ad}=\Ad_a(\xi)-\xi$.
Then
$$
\varrho^{\ad,\ad}_a=\sum_{\al\in \Phi^+} (x_\al f_{\tilde \al}-e_\al)\wedge (y_\al e_{\tilde \al}-f_\al),
$$
because the sum over $\al\in \Rm_\l^+$ vanishes.
This bivector turns zero  provided
$$
\sum_{\al\in \Phi^+} y_\al e_\al \wedge e_{\tilde \al}=0, \quad
  \sum_{\al\in \Phi^+}x_\al f_{\tilde \al}\wedge f_\al=0,
\quad
 \sum_{\al\in \Phi^+} e_\al \wedge f_\al + \sum_{\al\in \Phi^+} x_\al y_\al f_{\tilde \al}\wedge  e_{\tilde \al}=0.
$$
These equalities  are fulfilled if, respectively
$$
x_{\al}=x_{\tilde \al}, \quad y_{\al}=y_{\tilde \al}, \quad x_{\al}y_{ \al}=1\quad \forall \al \in \Phi^+.
$$
They hold true indeed thanks to (\ref{al-tilde-al}) and to  $x_\al y_{\tilde \al}=1$.
\end{proof}

Next we list  matrices satisfying the conditions of Proposition \ref{class_points}.
 \begin{equation}
A_0=\pm(\sum_{m<i<m'}e_{ii}+\sum_{1\leqslant i,i'\leqslant m} y_ie_{ii'}), \quad m\in \N, \quad m\leqslant \frac{N}{2},
\label{cso2}
\end{equation}
with $y_{i'}y_{i}=1$ for  $O(N)$- and  $SL(N)$-classes  of type T2;
 \begin{equation}
A_0= \sum_{i=1}^{N}y_ie_{ii'}, \quad N\in 2\N ,
\label{csp4}
\end{equation}
with  $y_{i'}y_{i}=-1$ for $SP(N)$-classes  of type T4;
\begin{equation}
A_0=\pm\bigl(\sum_{m<i<m'}e_{ii}+\sum\limits_{\substack{1\leq i,i'\leq m\\ i\in 2\Z +1 }}z_i(e_{i, i'-1}-e_{i+1,i'})\bigr)
, \quad m\in 2\N, \quad m\leqslant \frac{N}{2},
\label{csp2}
\end{equation}
with $z_iz_{i'-1}=1$, for  $SP(N)$-classes of type T2;
\begin{equation}
A_0=\sum\limits_{\substack{i=1\\ i\in 2\Z +1 }}^{N-1}z_i(e_{i, i'-1}-e_{i+1,i'}),\quad N\in 2\N,
\label{cso4}
\end{equation}
with $z_iz_{i'-1}=-1$ for $O(N)$-classes of type T4. For odd $n=\frac{N}{2}$ the matrix contains a diagonal term
 $\pm i(e_{nn}-e_{n'n'})$. Note that we do not restrict $A_0$ to the identity component of $G$.

\begin{definition}
We call a matrix $A$ with  $\lim_{q\to 1} A=A_0$ a quantization of $A_0$ if  it satisfies (\ref{RE}) and, where appropriate, (\ref{OC})
on replacement $\Q\to A$.
 \end{definition}
We will search for a quantization of $A_0$ in   the following two shapes:
\be
A=\sum_{i=1}^{N}x_{i}e_{ii}+\sum_{i=1}^{N}y_{i}e_{ii'},
\quad
A=\sum_{i=1}^Nx_ie_{ii}+\sum\limits_{\substack{i=1\\  i\in 2\Z +1 }}^N  z_i(e_{i, i'-1}-e_{i+1,i'}).
\label{ansatz}
\ee
In other words, the off-diagonal part of $A$ is either skew diagonal or,  for $N\in 2\N$, skew block-diagonal with the blocks being
scalar multiple of $\nu=\begin{bmatrix}
1&0\\
0&-1
\end{bmatrix}
$.

\begin{thm}
\label{RE-mat}
The following matrices quantize  classical points (\ref{cso2}-\ref{cso4}):
\be
 \pm\bigl(\sum_{i=1}^{m} q^{-m}(1-q^{-N+2m}) e_{ii}+\sum_{m<i<m'} q^{- m} e_{ii}+\sum_{1\leq i,i'\leq m}y_{i}e_{ii'}\bigr),
 \> \g =\s\l(N), \s\o(N),&&
\label{so(N)}
\\
 \pm\bigl(\sum_{i=1}^m  q^{-m}(1-q^{-N+2m}) e_{ii}+\sum_{m<i<m'} q^{- m}e_{ii}+\sum\limits_{\substack{1\leq i,i'\leq m\\ i\in 2\Z +1 }}z_i(e_{i, i'-1}-e_{i+1,i'})\bigr),
\>\g=\s\p(N),&&
\label{sp(N)}
\ee
with a convention  on the integer $m$ as above and
\be
&\sum_{i=1}^Ny_ie_{ii'},\quad \g=\s\p(N),
\label{sp(N)-Herm}
\\
& \sum\limits_{\substack{i=1\\ i\in 2\Z +1 }}^{N-1}z_i(e_{i, i'-1}-e_{i+1,i'}),\quad \g=\s\o(N),
\label{so(N)-Herm}
\ee
where the parameters $y_i,z_i \in \C$ satisfy the conditions
\begin{itemize}
  \item $y_{i}y_{i'}=q^{-N}$ in (\ref{so(N)}) and $y_iy_{i'}=-q^{-N-2}$ in (\ref{sp(N)-Herm}),
  \item  $z_{i}z_{i'-1}=q^{-N}$ for all odd $i$ in  (\ref{sp(N)}),
  \item $z_{i}z_{i'-1}=-q^{-N+2}$ for odd $i\leqslant \frac{N}{2}$  in (\ref{so(N)-Herm}).
\end{itemize}
\end{thm}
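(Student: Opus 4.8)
The plan is to verify directly that each of the four matrices satisfies the numerical reflection equation (\ref{4}) and, for the orthogonal and symplectic series, the auxiliary relation (\ref{OC}), since by the remarks preceding Definition 3.7 this is exactly what it means for $A$ to quantize $A_0$. The classical limit $q\to 1$ is immediate by inspection: in (\ref{so(N)}) and (\ref{sp(N)}) the diagonal coefficients $q^{-m}(1-q^{-N+2m})$ and $q^{-m}$ tend to $0$ and $1$ respectively, and the conditions $y_iy_{i'}=q^{-N}$, $z_iz_{i'-1}=q^{-N}$, $y_iy_{i'}=-q^{-N-2}$, $z_iz_{i'-1}=-q^{-N+2}$ tend to the classical constraints in (\ref{cso2})--(\ref{cso4}). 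So the whole content is the algebraic identities at generic $q$.

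First I would set up the computation in the factorized form. Write $A = D + Y$ where $D=\sum_i x_i e_{ii}$ is the (possibly zero) diagonal part and $Y$ is the skew part, either $\sum_i y_i e_{ii'}$ or $\sum_{i\ \mathrm{odd}} z_i(e_{i,i'-1}-e_{i+1,i'})$; note $Y$ is, up to the scalars, the matrix of the invariant form $\varpi$ (resp. its symplectic analog), so $Y$ has a clean interaction with $R$. The key structural facts I would isolate and prove as small lemmas are: (a) $R_{21}$ and $R_{12}$ act on the weight basis by explicit upper/lower triangular formulas from (\ref{R-mat-1}), (\ref{R-mat}); (b) the diagonal part $D$ is a function of the weight only through the partition into the blocks $\{i\le m\}$, $\{m<i<m'\}$, $\{i\ge m'\}$, so $R_{12}$ conjugates $D_1$ or $D_2$ in a controlled way; (c) $Y$ intertwines the first and second tensor slots with a sign/scalar twist, i.e. relations of the shape $Y_2 S_{12} Y_2 = (\text{scalar})\,\varpi$ analogous to (\ref{OC}) itself. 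Using these, I would expand both sides of $S A_2 S A_2 = A_2 S A_2 S$ into the four cross-terms $DD, DY, YD, YY$ and match them termwise; the constraints $y_iy_{i'}=q^{-N}$ etc. are precisely what is needed to close the $YY$ cross-term, and the particular value $q^{-m}(1-q^{-N+2m})$ of the diagonal entries on the first block is forced by the $DY$/$YD$ balance.

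Alternatively — and this is probably the cleaner route to present — I would not re-derive everything from scratch but instead invoke the representation-theoretic origin already recorded in Section \ref{secQuantSymCl}: the matrix $\Q$ on the base module $M_\la$ satisfies (\ref{RE}), (\ref{OC}), the minimal polynomial (\ref{IE}) or (\ref{IE_H}), and the $q$-trace conditions (\ref{TRSL})--(\ref{TROS_H }), with $\la$ determined by the chosen initial point $t$ in (\ref{IP1}), (\ref{IP2}). A one-dimensional representation of $\C_q[O]$ is then a matrix $A$ of the ansatz (\ref{ansatz}) that satisfies (\ref{RE}), (\ref{OC}), has the two prescribed eigenvalues with the prescribed multiplicities $P,M$ (equivalently $\frac N2,\frac N2$ in type T4), and has the prescribed $q$-trace. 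So the proof reduces to: plug the ansatz into these finitely many polynomial conditions and solve. The diagonal eigenvalue condition (\ref{IE}) fixes $x_i$ on the middle block to $q^{-M}$ (our $q^{-m}$ with $m=M/2$... I would reconcile the indexing $m=M/2$ here) and constrains the $2\times2$ skew blocks to have eigenvalues $-q^{-P},q^{-M}$, which gives $x_i$ on the outer block together with the product conditions on $y_i,z_i$; the $q$-trace condition then pins down the overall consistency. For type T4 the minimal polynomial $(\Q\mp iq^{-N/2+\ve})$ and $\Tr_q\Q=0$ similarly force the purely-skew shape with $y_iy_{i'}=-q^{-N-2}$ (symplectic) or $z_iz_{i'-1}=-q^{-N+2}$ (orthogonal).

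The main obstacle will be the bookkeeping in the reflection equation itself when the skew part is block-diagonal rather than genuinely anti-diagonal — i.e.\ the $SP(N)$ type T2 matrix (\ref{sp(N)}) and the $O(N)$ type T4 matrix (\ref{so(N)-Herm}), where the interaction of the $2\times2$ blocks $z_i(e_{i,i'-1}-e_{i+1,i'})$ with the two copies of $R$ from (\ref{R-mat}) produces the largest number of cross-terms and the $\kappa$- and $q^{\rho_i-\rho_j}$-factors have to be tracked carefully; the extra diagonal term $\pm i(e_{nn}-e_{n'n'})$ when $n=N/2$ is odd is an annoying special case that has to be checked separately. I expect the $\s\l(N)$ and $\s\o(N)$ anti-diagonal cases (\ref{so(N)}), (\ref{sp(N)-Herm}) to fall out quickly once the lemmas on how $R$ moves $D$ and $Y$ past each other are in place, and the whole argument to be a matter of organizing the case analysis by which of the three weight-blocks the indices $i,j$ land in.
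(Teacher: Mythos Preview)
Your primary plan --- direct verification of (\ref{RE}) and (\ref{OC}) by writing $A=D+Y$ and matching cross-terms --- is exactly what the paper does; its entire proof is the sentence ``direct verification of the identities (\ref{RE}) and (\ref{OC}) within the chosen ansatz (\ref{ansatz}),'' with a pointer to \cite{M0} for $\g=\s\l(N)$. So that part of your proposal is correct and aligned with the paper, and your assessment of which cases (the block-skew ones (\ref{sp(N)}), (\ref{so(N)-Herm})) generate the most bookkeeping is accurate.

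Your ``alternative'' route, however, is not a valid proof and should be dropped. Knowing that the operator $\Q$ on $M_\la$ satisfies (\ref{RE}), (\ref{OC}), (\ref{IE}) and the $q$-trace identity only tells you what relations a one-dimensional representation of $\C_q[O]$ \emph{must} satisfy; it does not manufacture such a representation or certify that a candidate numerical matrix $A$ is one. The minimal-polynomial and $q$-trace constraints are logically downstream of (\ref{RE}) and (\ref{OC}): they carve out the class inside $\C_q[G]$, but membership in $\C_q[G]$ is precisely the reflection equation plus (\ref{OC}), and there is no mechanism by which the eigenvalue/trace data alone force those quadratic matrix identities. The paper keeps these layers separate on purpose --- Theorem \ref{RE-mat} concerns only (\ref{RE}) and (\ref{OC}) (this is the content of the definition of ``quantizes'' just above it), while the minimal polynomial and $\Tr_q$ are verified afterwards in the next proposition. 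At best your alternative is a heuristic for \emph{guessing} the ansatz parameters; it cannot replace the computation. (A small side remark: your parenthetical ``$m=M/2$'' is off --- the paper's convention is $M=m$, $P=N-m$; the eigenvalue $-q^{-P}$ has multiplicity $m$, not $2m$.)
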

\begin{proof}
The proof is  a direct verification of the identities (\ref{RE}) and (\ref{OC}) where appropriate,  within the chosen ansatz  (\ref{ansatz}).
The case of $\g=\s\l(N)$
was studied  in \cite{M0}.
\end{proof}
These solutions are matrices of the shapes
\begin{center}
\begin{picture}(100,100)
\put(-5,-5){\line(0,1){110}}
\put(110,-5){\line(0,1){110}}
\put(75,30){\line(0,1){40}}
\put(35,30){\line(0,1){40}}
\multiput(15,95)(3,-3){3}{\circle*{1}}
\multiput(52,52)(3,-3){3}{\circle*{1}}
\multiput(95,95)(-3,-3){3}{\circle*{1}}
\multiput(8,8)(3,3){3}{\circle*{1}}

\put(100,100){$\scriptstyle y_{1}$}

\put(0,100){$\scriptstyle \la+\mu$}
\put(20,80){$\scriptstyle \la+\mu$}

\put(37,62){$\scriptstyle \la$}

\put(68,32){$\scriptstyle \la$}

\put(80,80){$\scriptstyle y_{m}$}

\put(20,20){$\scriptstyle y_{m'}$}
\put(0,0){$\scriptstyle y_{1'}$}

\end{picture}
\quad,\quad\quad\quad\quad\quad
\begin{picture}(100,100)
\put(-5,-5){\line(0,1){110}}
\put(110,-5){\line(0,1){110}}
\put(75,30){\line(0,1){40}}
\put(35,30){\line(0,1){40}}
\multiput(15,95)(3,-3){3}{\circle*{1}}
\multiput(52,52)(3,-3){3}{\circle*{1}}
\multiput(95,95)(-3,-3){3}{\circle*{1}}
\multiput(10,8)(3,3){3}{\circle*{1}}

\put(95,100){$\scriptstyle z_{1}\nu$}

\put(0,100){$\scriptstyle \la+\mu$}
\put(20,80){$\scriptstyle \la+\mu$}

\put(37,62){$\scriptstyle \la$}

\put(68,32){$\scriptstyle \la$}

\put(75,80){$\scriptstyle z_{m-1}\nu$}

\put(20,20){$\scriptstyle z_{m'}\nu$}
\put(0,0){$\scriptstyle z_{2'}\nu$}

\end{picture}
\quad,
\end{center}
where  the central diagonal block is of size $N-2m$ and $\la$ and $\mu$ are their eigenvalues.
In the case of (\ref{so(N)},\ref{sp(N)})
they are  $\la= \pm q^{-m}$ and $\mu=\mp q^{-N+m}$, of multiplicities $N-m$ and $m$, respectively.
In the case of (\ref{so(N)-Herm}) and (\ref{sp(N)-Herm}) the  central block disappears and the eigenvalues are
$\pm i q^{-\frac{N}{2}+\ve}$.
\begin{propn}
Suppose that $\det \lim_{q\to 1}A=1$. Then $A$ satisfies the equations of a quantum  conjugacy class
of $A_0=\lim_{q\to 1}A$ from Section \ref{secQuantSymCl}.
\end{propn}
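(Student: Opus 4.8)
The plan is to verify, relation by relation, that each matrix $A$ of Theorem~\ref{RE-mat} satisfies all the identities collected in Section~\ref{secQuantSymCl} that are not already provided by that theorem. Theorem~\ref{RE-mat} gives the reflection equation (\ref{RE}) and, in the orthogonal and symplectic cases, (\ref{OC}); what remains is the minimal polynomial (\ref{IE}) in type T2 and (\ref{IE_H}) in type T4 together with the correct class invariants $P$, $M$, the $q$-trace relations (\ref{TRSL})--(\ref{TRO}),(\ref{TRS}) in type T2 and (\ref{TROS_H }) in type T4, and, for $G=SL(N)$, the value of the quantum determinant. The hypothesis $\det\lim_{q\to1}A=1$ is what puts $A_0$ in the connected group --- which is needed for the parabolic Verma module description of $\C_q[O]$ recalled in Section~\ref{secQuantSymCl} to apply --- and in the $SL(N)$ and $O(N)$ cases it is equivalent to the standing hypothesis $M\in2\N$ for classes of type T2; for $SP(N)$ and for the T4 classes it holds automatically.

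First I would compute the spectrum of $A$ from the block form displayed after Theorem~\ref{RE-mat}. In the shapes (\ref{so(N)}),(\ref{sp(N)}) $A$ is a central scalar block $\pm q^{-m}\,\mathrm{Id}_{N-2m}$ together with corner $2\times2$ (resp.\ $4\times4$) blocks; for (\ref{so(N)}) the block on the pair $(i,i')$ reads, in the basis $v_i,v_{i'}$, $\pm\bigl(\begin{smallmatrix} q^{-m}(1-q^{-N+2m}) & y_i\\ y_{i'} & 0\end{smallmatrix}\bigr)$, and with $y_iy_{i'}=q^{-N}$ its characteristic polynomial is $X^{2}\mp(q^{-m}-q^{-N+m})X-q^{-N}=(X\mp q^{-m})(X\pm q^{-N+m})$, \emph{independent of the block}. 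Hence $A$ has the two eigenvalues $\la=\pm q^{-m}$ and $\mu=\mp q^{-N+m}$, of multiplicities $N-m$ and $m$ (a corner contributes one of each, the centre $N-2m$ copies of $\la$), so $(A\mp q^{-m})(A\pm q^{-N+m})=0$; since $q$ is not a root of unity, comparison with (\ref{IE}) forces $P=N-m$, $M=m$ for the upper sign ($P$ and $M$ exchanged for the lower sign), and $\det A_0=1$ fixes the parity of $m$ that makes $M$ even. The symplectic shape (\ref{sp(N)}) is identical once the $4\times4$ $z$-block is split into two $2\times2$ blocks. In the Hermitian cases each corner block squares to $y_iy_{i'}\,\mathrm{Id}=-q^{-N-2}\,\mathrm{Id}$ in (\ref{sp(N)-Herm}), resp.\ $z_iz_{i'-1}\,\mathrm{Id}=-q^{-N+2}\,\mathrm{Id}$ in (\ref{so(N)-Herm}), so the eigenvalues are $\pm i\,q^{-N/2+\ve}$ of equal multiplicity $\frac{N}{2}$, the $\ve$ matching the series; the extra diagonal term $\pm i(e_{nn}-e_{n'n'})$ present for odd $n=\frac{N}{2}$ has the same two eigenvalues, so (\ref{IE_H}) holds.

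Next I would compute the $q$-trace, which on a diagonal matrix is a weighted sum $\Tr_q(A)=\sum_i c_iA_{ii}$ with the known weights $c_i=q^{2\rho_i}$ (signed by $\ka_i$ in the $\s\p(N)$ normalisation). In type T2 the only nonzero $A_{ii}$ are $\pm q^{-m}(1-q^{-N+2m})$ for $i\le m$ and $\pm q^{-m}$ for $m<i\le N-m$; writing the partial sums of the weights for $\s\l(N)$ as $\sum_{i=1}^m q^{2\rho_i}=q^{N-m}[m]_q$ and $\sum_{m<i\le N-m}q^{2\rho_i}=[N-2m]_q$ and using $(q^{N-m}-q^{m})[m]_q+[N-2m]_q=q^{m}([N-m]_q-[m]_q)$ one gets $\Tr_q(A)=\pm([N-m]_q-[m]_q)=[P]_q-[M]_q$, which is (\ref{TRSL}); the shift of the exponents $2\rho_i$ between $\s\l(N)$ and the $B,C,D$ series reproduces the $[P\mp1]_q-[M\mp1]_q$ corrections of (\ref{TRO}) and (\ref{TRS}). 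In the Hermitian cases the diagonal of $A$ vanishes apart from the odd-$n$ term, whose $q$-trace is $\pm i(q^{2\rho_n}-q^{-2\rho_n})=0$ because $\rho_n=0$ for $\s\o(2n)$, so $\Tr_q(A)=0$, which is (\ref{TROS_H }). Finally, for $G=SL(N)$ the quantum determinant is not an independent relation: as remarked after (\ref{TRS}) it is fixed by the roots of the minimal polynomial of $\Q$, which are matched above, together with $\det A_0=1$ in the limit $q\to1$. Assembling the pieces, $A$ satisfies every defining relation of $\C_q[O]$ for the symmetric conjugacy class $O$ through $A_0$.

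I expect the only real obstacle to be bookkeeping, not any new idea: keeping track of the index ranges (corner blocks versus the central block, the involution $i\mapsto i'=N-i+1$, and the odd-$n=\frac{N}{2}$ exception in the orthogonal Hermitian case) and, most of all, carrying the three different $\rho$-normalisations of the $q$-trace through the $B$, $C$, $D$ series so that the $\pm1$ shifts of (\ref{TRO}) and (\ref{TRS}) come out correctly. The structural point making everything collapse to two-term relations is that the characteristic polynomial of each corner block of $A$ is independent of the block.
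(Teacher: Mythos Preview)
Your approach is correct and is essentially the same as the paper's: verify that the degree-two minimal polynomial of $A$ has the right roots (by inspecting the corner $2\times2$ blocks and the central scalar block) and then compute $\Tr_q(A)$ directly from the diagonal entries, treating the Hermitian and non-Hermitian shapes separately. The paper's proof is in fact terser than yours---it merely asserts the eigenvalues and that the $q$-trace works out---so your proposal supplies exactly the detail the paper leaves to the reader.
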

\begin{proof}
The  minimal polynomial of  $A$ has degree 2, and its eigenvalues are $-q^{-P}$ ad $q^{-M}$, as in (\ref{IE})
if one sets $P=N-m$, $M=m$ for the common factor $+1$, and $P=m$ and $M=N-m$ for   $-1$.
The minimal polynomials of $A$ and $\Q$ are the same, cf.    (\ref{IE_H}).

The proof then reduces to calculation of   $\Tr_q(A)$.
It easy to see that $\Tr_q(A)=0$   for $A$ from (\ref{sp(N)-Herm}) and  (\ref{so(N)-Herm}), as required.
With the above convention on $m$, the quantum trace of $A$ from (\ref{so(N)}) and (\ref{sp(N)}) is equal
to $\Tr_q(\Q)$  in  (\ref{TRSL}, \ref{TRO}) and (\ref{TRS}) respectively.
\end{proof}

\section{Left  coideal subalgebra $U_q(\k)\subset U_q(\g)$}
In this section we describe the quantum stabilizer subgroups of quantum points in symmetric conjugacy class, following \cite{Let}.
We prove basic facts here for reader's convenience.

Let $K\subset G$ be the  centraliser subgroup of $A_0=\lim_{q\to 1}A\in G$ and  $\k$ its Lie algebra. Conjugation with $A_0$
is an involutive automorphism $\theta$ of $G$ for which $K$ is the subgroup of fixed points.
It preserves the maximal torus of $T$ and   induces an automorphism  of the root system which we denote by the same letter $\theta$.
The Lie subalgebra in $\k$ generated by simple root vectors whose roots are fixed by $\theta$ is denoted by $\l$, as in the previous section.

For each $\al\in \Rm^+$ we denote by  $\g^\al$  a simple Lie subalgebra in $\g$ whose root system is generated
by simple roots entering $\al$ with non-zero coefficients. Let $\Rm_{\g^\al}$ be its (irreducible) root system and $\Pi_{\g^\al}\subset \Pi$
the basis of simple roots. Set $\l^\al=\l\cap \g^\al$.

Recall  that the root system splits to a disjoint union
$\Rm=\Rm_\l \cup\Phi^-\cup \Phi^+$
where   $\Phi^\pm \subset \Rm^\pm$ are  subsets flipped by $\theta$.

\begin{propn}
\label{loc-norm-ord}
Let $\Pi=\Pi_\l\cup \bar \Pi_\l$ be the decomposition of basis with  $\bar \Pi_\l\subset \Phi^+$.
Fix $\al\in \bar \Pi_\l$ and set $\tilde \al =-\theta(\al)\in \Rm^+$. Then
\begin{itemize}
  \item  $\tilde \al \in \al'+ \Rm^+_\l$ with $\al'\in \bar \Pi^+_\l$,
  \item  there is a normal order on $\Rm^+_{\g^{\tilde\al}}$ where roots from  $\Rm_{\l^{\tilde\al}}^+$ are on the left and
  $\tilde \al$ is the next root to the right of $\Rm_{\l^{\tilde\al}}^+$.
\end{itemize}
\end{propn}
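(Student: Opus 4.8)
The plan is to write $-\theta$ as the longest element $w^\l_0$ of the Weyl group $W_\l$ of $\l$ twisted by a Dynkin diagram automorphism, and then to recover the whole local picture at $\al$ from the action of $w^\l_0$ on the single simple root $\al'=\tau_0(\al)$. First, note that $\Pi_\l$ is the set of simple roots fixed by $\theta$, so $\theta$ acts as the identity on $\Rm_\l$; together with $\theta\Phi^+=\Phi^-$ this gives $(-\theta)(\Rm^+)=\Rm^-_\l\cup\Phi^+$. Since $\Phi^+=\Rm^+\setminus\Rm^+_\l$ is $W_\l$-stable, one also has $w^\l_0(\Rm^+)=\Rm^-_\l\cup\Phi^+$, so $\tau_0:=w^\l_0\circ(-\theta)$ preserves $\Rm^+$ and is therefore a diagram automorphism, with $-\theta=w^\l_0\circ\tau_0$ (using $(w^\l_0)^2=\id$). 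A one-line computation shows $\tau_0|_{\Pi_\l}=-w^\l_0|_{\Pi_\l}$, the standard diagram involution of $\Pi_\l$, so $\tau_0$ preserves $\Pi_\l$ and hence $\bar\Pi_\l$; thus for $\al\in\bar\Pi_\l$ we obtain $\tilde\al=-\theta(\al)=w^\l_0(\al')$ with $\al':=\tau_0(\al)\in\bar\Pi_\l$.

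The key step is to determine the support of $w^\l_0(\al')$. A simple root $\al'\notin\Pi_\l$ is antidominant for every connected component $C$ of $\Pi_\l$, hence $w^C_0(\al')$ is $C$-dominant. Writing $w^C_0(\al')=\al'+\sum_{\beta\in C}c_\beta\beta$ with $c_\beta\ge 0$ and using that $C$ is connected, if the support of $\sum_\beta c_\beta\beta$ were a proper subset of $C$ one could find a node $\beta_0\in C$ adjacent to that support with $\langle w^C_0(\al'),\beta_0^\vee\rangle<0$, contradicting $C$-dominance; hence $w^C_0(\al')=\al'$ if $\al'$ is not joined to $C$, and otherwise $w^C_0(\al')$ has support exactly $\{\al'\}\cup C$, with $\al'$ occurring with coefficient $1$. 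Applying $w^\l_0=\prod_C w^C_0$ to $\al'$ then gives $\mathrm{supp}(\tilde\al)=\{\al'\}\cup\Pi_*$, where $\Pi_*$ is the union of the components of $\Pi_\l$ joined to $\al'$; moreover $\al'$ has coefficient $1$ in $\tilde\al$, every $\beta\in\Pi_*$ has positive coefficient, and every other element of $\bar\Pi_\l$ has coefficient $0$. Since $\Pi_{\g^{\tilde\al}}=\mathrm{supp}(\tilde\al)$ by definition, this identifies $\Pi_{\l^{\tilde\al}}=\Pi_\l\cap\mathrm{supp}(\tilde\al)=\Pi_*$, and it yields the first assertion: $\tilde\al-\al'=\sum_{\beta\in\Pi_*}c_\beta\beta$ is a non-negative integral combination of positive roots of $\l$, with $\al'\in\bar\Pi_\l$.

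For the normal order, decompose $\Pi_\l=\Pi_*\sqcup Y$, where $Y$ is the union of the components of $\Pi_\l$ not joined to $\al'$. Then $Y$ is joined neither to $\al'$ nor to $\Pi_*$, so $w^\l_0=w^{\Pi_*}_0\,w^Y_0$ with commuting factors and $w^Y_0(\al')=\al'$; hence $\tilde\al=w^\l_0(\al')=w^{\l^{\tilde\al}}_0(\al')$, and since $w^{\l^{\tilde\al}}_0$ is an involution, $w^{\l^{\tilde\al}}_0(\tilde\al)=\al'\in\Pi_{\g^{\tilde\al}}$. Now take a reduced expression $w^{\l^{\tilde\al}}_0=s_{\beta_1}\cdots s_{\beta_p}$ with $\beta_j\in\Pi_{\l^{\tilde\al}}$; the roots $s_{\beta_1}\cdots s_{\beta_{k-1}}(\beta_k)$ for $k=1,\dots,p$ exhaust $\Rm^+_{\l^{\tilde\al}}$. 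Because $w^{\l^{\tilde\al}}_0(\al')=\tilde\al\in\Rm^+$, the word $s_{\beta_1}\cdots s_{\beta_p}s_{\al'}$ is reduced in $W_{\g^{\tilde\al}}$, and its next associated root equals $w^{\l^{\tilde\al}}_0(\al')=\tilde\al$. Extending this reduced word to a reduced expression for the longest element of $W_{\g^{\tilde\al}}$ (always possible) and passing to the corresponding normal order on $\Rm^+_{\g^{\tilde\al}}$, we get an order that starts with all of $\Rm^+_{\l^{\tilde\al}}$ and has $\tilde\al$ as its very next element, which is the second assertion.

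The only genuinely substantive point is the support computation, which identifies $\Pi_{\l^{\tilde\al}}$ as precisely the union of the components of $\Pi_\l$ attached to $\al'$; everything else is routine bookkeeping with longest elements of Levi subgroups, reduced words, and the standard bijection between reduced expressions for $w_0$ and normal orderings.
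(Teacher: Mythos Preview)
Your proof is correct and reaches the same pivotal identity $\tilde\al=w^{\l^{\tilde\al}}_0(\al')$ as the paper, after which the two arguments coincide: take a reduced word for $w^{\l^{\tilde\al}}_0$, append $s_{\al'}$ (reduced because $w^{\l^{\tilde\al}}_0(\al')=\tilde\al>0$), and extend to a reduced expression for $w^{\g^{\tilde\al}}_0$. The difference lies in how that identity is obtained. The paper defines $\al'$ by observing that $-\theta$ descends to a non-negative integer involutive matrix on $\h^*/\h^*_\l$, hence a permutation of the images of $\bar\Pi_\l$; it then proves $\tilde\al=w^{\l^{\tilde\al}}_0(\al')$ by a short maximality argument, namely that $\tilde\al$ is the highest weight of the irreducible $\l^{\tilde\al}$-module through $e_{\al'}$: if $\tilde\al+\beta$ were a root with $\beta\in\Z_{\ge0}\Pi_{\l^{\tilde\al}}$, then $-\theta(\tilde\al+\beta)=\al-\beta\in\Rm^+$ forces $\beta=0$. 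You instead establish the factorization $-\theta=w^\l_0\,\tau_0$ with $\tau_0$ a Dynkin diagram automorphism (which you prove from scratch), set $\al'=\tau_0(\al)$, and get $\tilde\al=w^\l_0(\al')$ immediately; your support computation then shows that only the components of $\Pi_\l$ adjacent to $\al'$ contribute, reducing $w^\l_0$ to $w^{\l^{\tilde\al}}_0$. Your route is the standard Satake-diagram structural picture and yields the explicit description $\Pi_{\l^{\tilde\al}}=\{\text{components of }\Pi_\l\text{ joined to }\al'\}$ as a bonus; the paper's highest-weight argument is shorter, avoids the support analysis, and stays closer to the representation-theoretic use made of the result downstream.
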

\begin{proof}
To prove the first assertion, consider $-\theta$ as an involutive operator on the vector space $V=\h^*/\h^*_\l$.
Here $\h_\l^*$ is the vector subspace generated by $\Pi_\l$.
We have $V=V^+\op V^-$, where $V^+ =\Span \{\bar \Pi_\l \}\mod \h^*_\l $. The roots $\al\in \bar \Pi_\l$ descend to a basis in $V^+$. Restriction of $-\theta$ to
$V^+$ is an involutive matrix with non-negative entries. This implies that it has
only one non-zero entry equal to $1$ in every row and every column. In other words, this operator induces an involutive permutation $\al\mapsto \al'$
on $\bar \Pi_{\l}$.

  There is a normal order  on $\Rm^+_{\g^{\tilde\al}}$ where roots from  $\Rm_{\l^{\tilde\al}}^+$ are on the left.
  The corresponding factorization of the longest element of the Weyl group reads $w_{\g^{\tilde \al}}=w_{\l^{\tilde\al}} u$,
  where $w_{\l^{\tilde\al}}$ is the longest element of the Weyl group of $\Rm_{\l^{\tilde\al}}$. The leftmost factor in $u$ is the simple
  reflection relative to $\al'$.
  Then the next root to the right after $\Rm_{\l^{\tilde\al}}^+$ is $w_{\l^{\tilde\al}}(\al')\in \al' + \Rm_{\l^{\tilde\al}}^+$. We argue that
  it equals $\tilde \al=-\theta(\al)$. It is sufficient to show that
  $\tilde \al$ is the highest weight in the irreducible  $\l^{\tilde \al}$-submodule in $\g^{\tilde \al}$  generated by $e_{\al'}$.
  But if $\eta =\tilde \al + \bt \in \Rm_{\l^{\tilde\al}}^+$ is a root with $\bt\in \Z_+\Pi_{\l^{\tilde\al}}$, then
  $-\theta(\eta)=\al - \bt\in \Rm^+$, which forces $\bt=0$.
\end{proof}
Remark that $\al'\not =\al$ correspond to an arc in the corresponding Satake diagram connecting $\al$ and $\al'$, cf. \cite{GW}.

For $A_0$ of type T2 we have
\begin{equation*}
\theta(\ve_i)=\begin{cases}
\ve_{i'},& \mathrm{if}\ i\leq m, \\
\ve_{i}, &\mathrm{if}\ i>m,\\
\end{cases},
\quad \g=\s\l(N),
\quad\theta(\ve_i)=\begin{cases}
-\ve_i,&\mathrm{if}\ i\leq m, \\
\ve_{i}, &\mathrm{if}\ i>m,\\
\end{cases},
\quad \g= \s\o(N),
\end{equation*}

\begin{equation*}
\theta(\ve_i)=\begin{cases}
-\ve_{i+1},& \mathrm{if}\ i<m\ \mathrm{is\ odd}, \\
-\ve_{i-1},& \mathrm{if}\ i\leq m\ \mathrm{is\ even}, \\
\ve_{i}, & \mathrm{if}\ i>m,\\
\end{cases}
\quad \g= \s\p(N).
\end{equation*}
Then $\Pi_\l$ is explicitly
$$
\{\al_i\}_{i=m+1}^{n-m}, \quad \{\al_i\}_{i=m+1<n}^n,
\quad
\{\al_{2i+1} \}_{i=0}^{\frac{m}{2}-1}\cup  \{\al_i\}_{i=m+1}^n
$$
for $\g=\s\l(N)$, $\g=\s\o(N)$, and  $\g=\s\p(N)$, respectively (recall that $m$ is even for $\s\p(N)$).

For simple roots  $\al \in\bar \Pi_\l$ we have
$$
\tilde \al_i=\al_{n+1-i}, \quad i=1,\ldots, m-1,\quad  \tilde \al_i=\al_{n+1-i},\quad i=n+2-m,\ldots, n,
$$
$$
 \tilde \al_m=\sum_{l=m+1}^{n+1-m}\al_l, \quad \tilde \al_{n+1-m}=\sum_{l=m}^{n-m}\al_l,
$$
for $\g=\s\l(n+1)$,
$$ \tilde \al_i=\al_i, \quad i=1,\ldots, m-1,\quad  \tilde \al_m= \al_m+2\sum_{l=m+1}^{n}\al_l,$$
for $\g=\s\o(2n+1)$,
$$ \tilde \al_i=\al_i, \quad i=1,\ldots, m-1,\quad  \tilde \al_m= \al_m+2\sum_{l=m+1}^{n-2}\al_l+\al_{n-1}+\al_n,\quad m\leqslant n-2,$$
$$  \tilde \al_{n-1}= \al_n,\quad \tilde \al_{n}= \al_{n-1},\quad m=n-1,\quad \tilde \al_i=\al_i, \quad i=1,\ldots, n,\quad m=n,$$
for $\g=\s\o(2n)$,
$$ \tilde \al_{2i}=\sum_{l=2i-1}^{2i+1}\al_{l}, \quad i=1,\ldots, \frac{m}{2}-1,\quad  \tilde \al_m= \al_{m-1}+\al_{m}+2\sum_{l=m+1}^{n-1}\al_l+\al_n,\quad m<n,
$$
$$
 \tilde \al_m=2\al_{m-1}+\al_m, \quad m=n, \quad n\in 2\Z,
$$
for $\g=\s\p(2n)$. We have $\al_i'=\al_{n+1-i}$ and  $\al_{n+1-i}'=\al_i$ for $\g=\s\l(n+1)$ and $i\leqslant m<\frac{N}{2}$.
For $\s=\s\o(2n)$ and $m=n-1$, $\al'_n=\al_{n-1}$ and $\al'_{n-1}=\al_n$.
In  all other cases,  $\al'=\al$.

For $A_0$ of type T4 we have
\begin{equation*}
\theta(\ve_i)=
-\ve_i,\>  i=1,\ldots, n  \\
,
\> \g=\s\p(2n), \quad
\theta(\ve_i)=\begin{cases}
-\ve_{i+1},&  n>i \in 2\N+1, \\
-\ve_{i-1},&   n\geq i\in 2\N, \\
\ve_{i},&  n= i\in 2\N+1, \\\end{cases}
\> \g=  \s\o(2n).
\end{equation*}

Then $\Pi_\l=
\{\varnothing \},
$
for $\g=\s\p(2n)$ and
$\Pi_\l=\{\al_{2i+1} \}_{i=0}^{l-1}$
for $\g=\s\o(2n)$.
On other roots, $\theta$ acts by
$$ \tilde \al_i=\al_i, \quad i=1,\ldots, n,$$
for $\g=\s\p(2n)$ and by
$$ \tilde \al_{i}=\sum_{l=i-1}^{i+1}\al_{l}, \quad   2\N \ni i<n-1,\quad \tilde \al_{i}=\al_{i}, \quad i=n\in 2\N,$$
$$ \tilde \al_{n-1}=\al_{n-2}+\al_{n},\quad \tilde \al_{n}=\al_{n-2}+\al_{n-1},\quad n\in 2\N+1.$$
for $\g=\s\o(2n)$.
Here we have $\al'_{n-1}=\al_{n}$ and $\al'_{n}=\al_{n-1}$ and $\al'=\al$ for all other $\al$.

The Lie algebra $\k$ is generated by $\l$ and elements $X_\al=e_\al+c_\al f_{\tilde \al}$, $h_{\tilde \al}-h_\al$ with $\al\in \bar \Pi_\l$,
where $c_\al$ depend on
parameters of the classical point $A_0$.
We restrain from calculating $c_\al$ because we give  their quantum deformations in the next section.

The algebra $U(\k)$ is quantized as a coideal subalgebra, $U_q(\k)$, which makes a  quantum symmetric pair together with  $U_q(\g)$.
We specialize the general theory of quantum symmetric pairs developed in \cite{Let} to the case of conjugacy classes and give a short proof of basic facts for reader's convenience.

Recall that a left   coideal algebra $\Bc$ of a Hopf algebra $\Hc$  satisfies the inclusion
 $$\Delta(\Bc)\subset \Hc\tp \Bc.$$
For instance, $U_q(\g_+)$  is a left coideal subalgebras in $U_q(\g)$. More examples   come from the following fact.
Denote by $F_\al=q^{h_\al}f_\al$ and by $\Uc^-\subset U_q(\g)$ the subalgebra generated by $F_\al$.
Choose a normal order on $\Rm^+$ and extend $F_\bt$ to  all $\bt \in \Rm^+$ as $a_\bt e_{-\bt}$ in \cite{KT}, where
$a_\bt$ is a non-zero complex number. Denote by $\Uc_m^-$ the subalgebra in $\Uc^-$ generated by $F_{\bt^i}$ with $i\leqslant m$.
\begin{lemma}
\label{coideal_norm_order}
For each for each $m=1,\ldots,  |\Rm^+|$, $\Uc_m^-$ is a left coideal subalgebra in $\U_q(\g)$.
Furthermore,
$$
\Delta(F_{\bt^m})=  (F_{\bt^m}\tp 1+q^{h_{\bt^m}}\tp F_{\bt^m}) +  U_q(\g)\tp \Uc^-_{m-1}.
$$
\end{lemma}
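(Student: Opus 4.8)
The plan is to proceed by downward induction on $m$ using the coproduct formula for root vectors of a quantum group expressed in a normal (convex) order, together with the fact that $\Uc^- = \Uc^-_{|\Rm^+|}$ is a left coideal subalgebra (this is the case $m = |\Rm^+|$ and corresponds to the statement, already invoked implicitly via $U_q(\g_\pm)$ being coideal, that $\Delta(F_\al) = F_\al \tp 1 + q^{h_\al}\tp F_\al$ on simple roots $\al$ and hence $\Delta(\Uc^-)\subset U_q(\g)\tp \Uc^-$). The key input is the Levendorskii--Soibelman type straightening relations: for a normal order $\bt^1 < \bt^2 < \cdots < \bt^{|\Rm^+|}$ on $\Rm^+$ and the corresponding PBW root vectors $e_{-\bt}$, the coproduct of $e_{-\bt^m}$ has the form $\Delta(e_{-\bt^m}) = e_{-\bt^m}\tp 1 + q^{-h_{\bt^m}}\tp e_{-\bt^m} + \sum (\text{lower terms})$, where the lower terms lie in $U_q(\b^-)\tp (\Uc^-_{m-1})$ — that is, the second tensor leg involves only root vectors $e_{-\bt^i}$ with $i < m$ (strictly earlier in the normal order). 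After correcting by the toral factor to pass from $e_{-\bt}$ to $F_{\bt} = a_\bt e_{-\bt}$ (absorbing $q^{-h_\bt}\mapsto q^{h_\bt}$ via $F_\al = q^{h_\al}f_\al$ on simple roots and the standard rescaling on non-simple ones), one obtains precisely
$$
\Delta(F_{\bt^m}) = F_{\bt^m}\tp 1 + q^{h_{\bt^m}}\tp F_{\bt^m} + (\text{terms in } U_q(\g)\tp \Uc^-_{m-1}).
$$

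First I would record the displayed coproduct formula for $F_{\bt^m}$, citing \cite{KT} for the convex-order coproduct identities; this is essentially the second assertion of the lemma and, once established, gives the first assertion immediately. Indeed, $\Uc^-_m$ is generated as an algebra by $F_{\bt^1},\dots,F_{\bt^m}$, so to check $\Delta(\Uc^-_m)\subset U_q(\g)\tp \Uc^-_m$ it suffices, since $\Delta$ is an algebra map and $U_q(\g)\tp \Uc^-_m$ is a subalgebra of $U_q(\g)\tp U_q(\g)$, to verify the inclusion on each generator $F_{\bt^j}$, $j\leq m$. For such a $j$ the formula gives $\Delta(F_{\bt^j})\in F_{\bt^j}\tp 1 + q^{h_{\bt^j}}\tp F_{\bt^j} + U_q(\g)\tp \Uc^-_{j-1}$, and since $j-1 < j \leq m$ we have $\Uc^-_{j-1}\subset \Uc^-_m$ and $F_{\bt^j}\in\Uc^-_m$, so $\Delta(F_{\bt^j})\in U_q(\g)\tp \Uc^-_m$, as needed.

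The main obstacle is pinning down the convex-order coproduct formula in the form stated, in particular that the error terms lie in the second leg in $\Uc^-_{m-1}$ and not merely in $\Uc^-_m$ or in all of $\Uc^-$. This is where the normal-ordering hypothesis is essential: one uses that in a convex order the PBW monomials appearing when one expands $\Delta(e_{-\bt^m})$ and straightens them back to the ordered basis involve, on the right tensor leg, only root vectors strictly preceding $\bt^m$ — a statement that follows from the Levendorskii--Soibelman relations (commutators $[e_{-\bt^i}, e_{-\bt^j}]$ for $i<j$ are $\Z_{\geq 0}$-combinations of ordered monomials in the $e_{-\bt^k}$ with $i<k<j$) combined with the fact that $q^{h}$-weights are preserved. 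I would either cite this directly from \cite{KT} or sketch it: write $\Delta(e_{-\bt^m})$ via the Drinfeld--Jimbo $R$-matrix / the action of the braid group automorphisms $T_w$ on the simple-root coproduct, and observe that the weight of the right leg is $-\bt^m$ minus a sum of positive roots, hence a negative root $-\eta$ with $\eta \leq \bt^m$; convexity of the order then forces $\eta$ to be a nonnegative combination of the $\bt^i$ with $i\leq m$, and the leading diagonal term $q^{-h_{\bt^m}}\tp e_{-\bt^m}$ having been split off, the remainder has right leg a genuine product in the $\bt^i$, $i\leq m-1$. The toral rescaling $e_{-\bt}\rightsquigarrow F_\bt$ then only modifies the grouplike prefactor $q^{-h_{\bt^m}}$ to $q^{h_{\bt^m}}$ and rescales coefficients, leaving the structure of the formula intact.
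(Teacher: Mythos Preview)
Your proposal is correct and follows essentially the same route as the paper: both reduce the lemma to the convex-order coproduct formula for PBW root vectors from \cite{KT}. The paper is terser still, simply citing \cite{KT}, Prop.~8.3, and observing that the assignment $F_\al\mapsto e_{-\al}$ on simple roots extends to an algebra isomorphism which is a coalgebra \emph{anti}-isomorphism; this last remark is the clean way to handle the sign flip $q^{-h_{\bt^m}}\rightsquigarrow q^{h_{\bt^m}}$ that you treat by ``toral rescaling''. Your opening mention of ``downward induction on $m$'' is not actually used in the argument you then give (and is not needed), so you might drop it.
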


\begin{proof}
This readily follows from  \cite{KT}, Prop. 8.3. upon the assignment $F_\al\mapsto e_{-\al}$ on simple root vectors, extended as an algebra isomorphism.
It is then a coalgebra anti-isomorphism.
\end{proof}
We conclude, by   Lemma \ref{coideal_norm_order},
that for each $\al\in \bar \Pi_\l$ the elements $F_{\tilde \al}$ and $F_\mu$ with  $\mu\in \Pi_{\l^{\tilde\al}}$
form a left   coideal subalgebra in $U_q(\g)$.

A subalgebra $U_q(\k)\subset U_q(\g)$ is generated over  $U_q(\l)$ by
$$
X_\al=q^{h_{\tilde \al}-h_\al}e_{\al}+c_\al F_{\tilde \al},
$$
 and  by $q^{\pm(h_{\tilde\al}- h_\al)}$ with  $\al \in \bar \Pi_\l$
and  non-zero $c_\al\in \C$ to be determined in the next section.

\begin{thm}[\cite{Let}]
  The algebra generated by $e_\al,f_\al, q^{\pm h_\al}$ with  $\al \in \Pi^\theta$, and $X_\al=q^{h_{\tilde \al}-h_\al}e_\al+c_\al F_{\tilde \al}$,
  $q^{\pm(h_{\tilde \al}-h_\al)}$ with $\al \in \bar\Pi^\theta $, is a left coideal   in $U_q(\g)$.
\end{thm}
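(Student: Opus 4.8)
The plan is to verify the coideal property on generators, since the coideal condition $\Delta(\Bc)\subset U_q(\g)\tp \Bc$ is multiplicative: if it holds on a generating set, it holds on the whole subalgebra (because $\Delta$ is an algebra homomorphism and $U_q(\g)\tp\Bc$ is a subalgebra of $U_q(\g)\tp U_q(\g)$). So it suffices to check it on the three families of generators of $U_q(\k)$: the standard generators $e_\al,f_\al,q^{\pm h_\al}$ for $\al\in\Pi_\l=\Pi^\theta$; the Cartan-type elements $q^{\pm(h_{\tilde\al}-h_\al)}$ for $\al\in\bar\Pi_\l$; and the twisted generators $X_\al=q^{h_{\tilde\al}-h_\al}e_\al+c_\al F_{\tilde\al}$ for $\al\in\bar\Pi_\l$.

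First I would dispose of the easy cases. For $\al\in\Pi_\l$, the elements $e_\al,f_\al,q^{\pm h_\al}$ lie in $U_q(\l)\subset U_q(\k)$ and the standard coproduct formulas $\Delta(e_\al)=q^{h_\al}\tp e_\al+e_\al\tp 1$, $\Delta(f_\al)=f_\al\tp q^{-h_\al}+1\tp f_\al$ already have their right tensor legs inside $U_q(\l)\subset U_q(\k)$, and similarly $\Delta(q^{\pm h_\al})=q^{\pm h_\al}\tp q^{\pm h_\al}$; so these cause no trouble. The grouplike elements $q^{\pm(h_{\tilde\al}-h_\al)}$ are likewise handled by $\Delta(q^{\pm(h_{\tilde\al}-h_\al)})=q^{\pm(h_{\tilde\al}-h_\al)}\tp q^{\pm(h_{\tilde\al}-h_\al)}$, whose right leg is one of our generators.

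The substance is the twisted generator $X_\al$ with $\al\in\bar\Pi_\l$. I would compute $\Delta(X_\al)=\Delta(q^{h_{\tilde\al}-h_\al})\Delta(e_\al)+c_\al\Delta(F_{\tilde\al})$. The first term is $(q^{h_{\tilde\al}-h_\al}\tp q^{h_{\tilde\al}-h_\al})(q^{h_\al}\tp e_\al+e_\al\tp 1)=q^{h_{\tilde\al}}\tp q^{h_{\tilde\al}-h_\al}e_\al+q^{h_{\tilde\al}-h_\al}e_\al\tp q^{h_{\tilde\al}-h_\al}$. For the second term I invoke Lemma \ref{coideal_norm_order} with the normal order furnished by Proposition \ref{loc-norm-ord}: there is a normal order on $\Rm^+_{\g^{\tilde\al}}$ in which the roots of $\Rm^+_{\l^{\tilde\al}}$ come first and $\tilde\al$ is the very next root. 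Taking $m$ so that $\bt^m=\tilde\al$, the lemma gives $\Delta(F_{\tilde\al})=F_{\tilde\al}\tp 1+q^{h_{\tilde\al}}\tp F_{\tilde\al}+U_q(\g)\tp\Uc^-_{m-1}$, and by the choice of normal order $\Uc^-_{m-1}$ is generated by the $F_\mu$ with $\mu\in\Pi_{\l^{\tilde\al}}\subset\Pi_\l$, hence lies in $U_q(\l)\subset U_q(\k)$. Combining, the right tensor legs that appear are $q^{h_{\tilde\al}-h_\al}e_\al+c_\al F_{\tilde\al}=X_\al$ (from matching the $q^{h_{\tilde\al}}\tp(-)$ pieces), together with $1$, $q^{h_{\tilde\al}-h_\al}$, and elements of $U_q(\l)$ — all in $U_q(\k)$. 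Thus $\Delta(X_\al)\in U_q(\g)\tp U_q(\k)$.

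I expect the main obstacle to be bookkeeping around Lemma \ref{coideal_norm_order}: one must check that the normal order asserted in Proposition \ref{loc-norm-ord} genuinely places $\tilde\al$ immediately after $\Rm^+_{\l^{\tilde\al}}$ (so that the "lower" part $\Uc^-_{m-1}$ is exactly $U_q(\l^{\tilde\al})^-$ and nothing larger), and that the grouplike prefactor $q^{h_{\tilde\al}-h_\al}$ interacts correctly with the $q^{h_{\tilde\al}}\tp F_{\tilde\al}$ term so that the two contributions to the right leg assemble precisely into $X_\al$ rather than some unwanted combination. Once these points are in place the verification is purely formal. A concluding remark would note that the resulting coideal is in fact the quantum symmetric pair coideal of \cite{Let}, with the $c_\al$ pinned down in the next section.
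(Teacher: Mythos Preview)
Your proposal is correct and follows the same route as the paper: both reduce to the coproduct of $X_\al$, compute $\Delta(q^{h_{\tilde\al}-h_\al}e_\al)$ directly, and invoke Lemma~\ref{coideal_norm_order} together with the normal order of Proposition~\ref{loc_norm_ord} (placing $\tilde\al$ immediately after $\Rm^+_{\l^{\tilde\al}}$) to control $\Delta(F_{\tilde\al})$ modulo $U_q(\g)\tp U_q(\l^{\tilde\al})$, so that the $q^{h_{\tilde\al}}\tp(-)$ pieces assemble into $q^{h_{\tilde\al}}\tp X_\al$. Your write-up is in fact more explicit than the paper's, spelling out the multiplicativity of the coideal condition and the easy generator checks that the paper leaves implicit.
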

\begin{proof}
By Lemma \ref{coideal_norm_order}, one has  for each   $\al \in \Pi^\theta$:
$$
\Delta(F_{\tilde \al})\in  (q^{h_{\tilde\al}}\tp F_{\tilde\al}+F_{\tilde \al}\tp 1) +  U_q(\g)\tp U_q(\l^{\tilde \al}),
$$
$$
\Delta (q^{h_{\tilde \al}-h_\al}e_\al)= q^{h_{\tilde \al}}\tp q^{h_{\tilde \al}-h_\al}e_\al+q^{h_{\tilde \al}-h_\al}e_\al \tp q^{h_{\tilde \al}-h_\al}.
$$
Adding the lines together one gets
$$
\Delta(f_{\tilde \al})\in q^{h_{\tilde\al}} \tp X_{\al}+ F_{\tilde\al}\tp 1  + q^{h_{\tilde \al}-h_\al}e_\al\tp  q^{h_\al -h_{\tilde \al}}  +   U_q(\g)\tp  U_q(\l^{\tilde \al}),
$$
as required.
\end{proof}

\section{$U_q(\k)$ as a quantum isotropy subgroup}
\label{SecIsotropy}
In this section we specify the mixture parameters $c_\al$,  $\al \in \bar \Pi^\theta$, entering the definition of $U_q(\k)$  to make $A$ a fixed point of  $U_q(\k)$.
Consider the  natural representation $\pi:U_q(\g) \rightarrow \End(\C^N)$ determined by the assignment
\begin{equation*}
q^{h_i} \mapsto\begin{cases}
\sum_{j=1}^{N}q^{ \delta_{ji}-\delta_{j(i+1)}} e_{jj},&\mathrm{if}\ i\leq n,\  \g=\s\l(n+1),\\
\sum_{j=1}^{N}q^{ \delta_{ji}-\delta_{ji'}} q^{- \delta_{j(i+1)}+\delta_{j(i+1)'}}e_{jj},&\mathrm{if}\ i< n,\  \g=\s\o(N),\ \mathrm{or}\ \g=\s\p(N),\\
\sum_{j=1}^{N}q^{\delta_{jn}-\delta_{jn'}}e_{jj},&\mathrm{if}\ i=n ,\ \g=\s\o(2n+1), \\
\sum_{j=1}^{N}q^{2\delta_{jn}-2\delta_{jn'}}e_{jj},&\mathrm{if}\ i=n ,\ \g=\s\p(2n), \\
\sum_{j=1}^{N}q^{\delta_{j(n-1)}-\delta_{j(n-1)'}} q^{ \delta_{jn}-\delta_{jn'}}e_{jj},&\mathrm{if}\ i=n ,\ \g=\s\o(2n), \\

     \end{cases}
\end{equation*}

\begin{equation*}
e_i\mapsto\begin{cases}
e_{i,i+1},&\mathrm{if}\ i\leq n  ,\ \g=\s\l(n+1),\\
e_{i,i+1}-e_{(i+1)',i'},& \begin{cases}\mathrm{if}\ i\leq n, \  \g=\s\o(2n+1), \\
\mathrm{if}\ i<n, \ \g=\s\o(2n),\ \mathrm{or}\  \g=\s\p(2n),\end{cases}\\
e_{n-1,n+1}-e_{(n+1)',(n-1)'},&\mathrm{if}\ i=n ,\ \g=\s\o(2n), \\
e_{n,n+1},&\mathrm{if}\ i=n  ,\ \g=\s\p(2n),\\

     \end{cases}
\end{equation*}
and the similar assignment  to  $f_i$ as for $e_i$ with  $e_{ij}$  changed  to $e_{ji}$.
This representation is compatible  with the R-matrices (\ref{R-mat-1}) and (\ref{R-mat}).

Consider  a matrix $A\in \End(V)$  from Theorem \ref{RE-mat} and the involution $\theta$ determined by its classical limit $A_0=\lim_{q\to 1} A$.
Let  $U_q(\l)\subset U_q(\k)$ be the subalgebras associated with $\theta$ as in the previous section.
It is straightforward to see that the matrix $A$ commutes with  $\pi(U_q(\l))$
and all $\pi(q^{h_{\tilde \al}-h_\al})$,   $\al \in \bar \Pi_\l$.
The parameters $c_\al$ will be fixed from the requirement that $A$ commutes
with all
$$
\pi(X_\al)=\pi(q^{h_{\tilde \al}-h_\al}e_{\al}+c_\al F_{\tilde \al}), \quad \al \in \bar \Pi_\l.
$$
We will present explicit expressions for $F_{\tilde \al}$ with non-simple $\tilde \al$, that is, when $\Pi_{\l^{\tilde\al}} \not =\varnothing$.

In order to make the presentation more readable, it is supplemented with the  Satake diagrams. Recall that
they are Dynkin diagrams with additional data that parameterize symmetric pairs. Simple roots from $\Pi_\l$ are depicted by black nodes;
they are fixed by $\theta$. White nodes label simple roots from $\bar \Pi_\l$. If the root $\al'\in \Pi_{\tilde \al}$ is distinct from $\al$, it is connected with $\al$
by an arc.
\subsection{$\k=\s\l(n+1)$}
There are two types of Satake diagrams describing  to symmetric classes of special linear group $SL(N)$, $N=n+1$:
 \begin{center}
\begin{picture}(160,80)
\put(10,10){\circle{3}}
\put(50,10){\circle{3}}
\put(100,10){\circle{3}}
\put(140,10){\circle*{3}}
\put(12,10){\line(1,0){35}}
\put(52,10){\line(1,0){10}}
\put(88,10){\line(1,0){10}}
\put(67,10){$\ldots$}
\put(103,10){\line(1,0){36}}

\put(10,70){\circle{3}}
\put(50,70){\circle{3}}
\put(100,70){\circle{3}}
\put(140,70){\circle*{3}}
\put(12,70){\line(1,0){35}}
\put(52,70){\line(1,0){10}}
\put(88,70){\line(1,0){10}}
\put(67,70){$\ldots$}
\put(103,70){\line(1,0){36}}

\put(140,12){\line(0,1){17}}
\put(140,30){\circle*{3}}
\put(140,35){$\vdots$}
\put(140,50){\circle*{3}}
\put(140,52){\line(0,1){17}}
\put(140,70){\circle*{3}}

 \qbezier(8,16)(0,40)(8 ,64) \put(6.9,19.5){\vector(1,-3){2}} \put(7.0,61.3){\vector(1,3){2}}
 \qbezier(48,16)(40,40)(48 ,64) \put(46.9,19.5){\vector(1,-3){2}} \put(47.0,61.3){\vector(1,3){2}}
\qbezier(98,16)(90,40)(98 ,64) \put(96.9,19.5){\vector(1,-3){2}} \put(97.0,61.3){\vector(1,3){2}}

\put(12,62){$\al_1$}
\put(102,62){$\al_m$}

\put(12,13){$\al_n$}
\put(102,13){$\al_{N-m}$}

 \end{picture}
\quad \quad
\begin{picture}(160,85)
\put(65,80){$m=\frac{N}{2}$}

\put(10,10){\circle{3}}
\put(50,10){\circle{3}}
\put(100,10){\circle{3}}
\put(140,10){\circle{3}}
\put(12,10){\line(1,0){35}}
\put(52,10){\line(1,0){10}}
\put(88,10){\line(1,0){10}}
\put(67,10){$\ldots$}
\put(103,10){\line(1,0){36}}

\put(10,70){\circle{3}}
\put(50,70){\circle{3}}
\put(100,70){\circle{3}}
\put(140,70){\circle{3}}
\put(12,70){\line(1,0){35}}
\put(52,70){\line(1,0){10}}
\put(88,70){\line(1,0){10}}
\put(67,70){$\ldots$}
\put(103,70){\line(1,0){36}}

\put(141.5,11.5){\line(1,1){27}}
 \put(170,40){\circle{3}}
\put(141.5,68.5){\line(1,-1){27}}
\put(140,70){\circle{3}}

 \qbezier(8,16)(0,40)(8 ,64) \put(6.9,19.5){\vector(1,-3){2}} \put(7.0,61.3){\vector(1,3){2}}
 \qbezier(48,16)(40,40)(48 ,64) \put(46.9,19.5){\vector(1,-3){2}} \put(47.0,61.3){\vector(1,3){2}}
\qbezier(98,16)(90,40)(98 ,64) \put(96.9,19.5){\vector(1,-3){2}} \put(97.0,61.3){\vector(1,3){2}}
\qbezier(138,16)(130,40)(138 ,64) \put(136.9,19.5){\vector(1,-3){2}} \put(137.0,61.3){\vector(1,3){2}}

\put(12,62){$\al_1$}
\put(172,42){$\al_m$}

\put(12,13){$\al_n$}
 \end{picture}

\end{center}
All roots $\tilde \al_i$ with $i<m$ and $i>N-m$ are simple. For such roots we find
$$
c_{\al_{i}}=
\frac{y_{i+1}}{y_{i}},\quad \mathrm{if}\quad i< m \quad \mathrm{or} \quad i> N-m.
$$
The diagram on the right corresponds to    $m=\frac{N}{2}$ for even $N$.
Then $\tilde \al_m =\al_m$, and
$$
c_{\al_{i}}=\frac{q^{-2m+1}}{y_m^2}.
$$
In the case of  $m<\frac{N}{2}$, the roots $\tilde \al_m$ and $\tilde \al_{N-m}$ are not simple.
We define the corresponding root vectors as
$$F_{\tilde \al_m}=[\dots[F_{m+1}, F_{m+2}]_{q} ,\dots  F_{n+1-m}]_{q}, \quad F_{\tilde \al_{N-m}}=[\dots[F_{m}, F_{m+1}]_{\bar q} ,\dots  F_{n-m}]_{\bar q}.$$
The mixture parameters are found to be
$$
c_{\al_{m}}=\frac{(-1)^{N+1}q^{-N+m}}{y_m}, \quad
c_{\al_{N-m}}=\frac{(-1)^{N+1}q^{2N - 5m - 3} }{y_m}.
$$
The algebra $U_q(\k)$ stabilizes the quantum point (\ref {so(N)}).
\subsection{$\k=\s\o(2n+1)$}
For the  odd orthogonal group, we have $\tilde \al_i=\al_i$ for all $i<m$.
Then
$$
c_{\al_{i}}=-\frac{q y_{i+1}}{y_{i}}.
$$
When $m\leqslant n-1$, we distinguish the following  two cases:
$$F_{\tilde \al_m}=[\dots[[[\dots[[F_{m}, F_{m+1}]_{q}, F_{m+2}]_{q},\dots  F_n]_{q},F_n],F_{n-1}]_{q},\dots F_{m+1}]_{q},\quad  m<n-1, $$
$$F_{\tilde \al_m}=[[F_{n-1}, F_{n}]_{q},F_n],\quad  m=n-1.$$
 These correspond to  the diagram on the left below:
 \begin{center}
\begin{picture}(200,30)

\put(2,10){\line(1,0){10}}
\put(38,10){\line(1,0){10}}
\put(0,10){\circle{3}}
\put(17,10){$\ldots$}
\put(50,10){\circle{3}}
\put(51.5,10){\line(1,0){27}}
\put(80,10){\circle{3}}
\put(81.5,10){\line(1,0){27}}
\put(110,10){\circle{3}}

\put(112,10){\line(1,0){10}}
\put(148,10){\line(1,0){10}}
\put(110,10){\circle*{3}}
\put(127,10){$\ldots$}
\put(160,10){\circle*{3}}
\put(161,8.5){\line(1,0){24.5}}
\put(161,11.5){\line(1,0){24.5}}
\put(181,7){$>$}
\put(190,10){\circle*{3}}

\put(82,14){$\al_m$}

\put(0,14){$\al_1$}

\put(187,14){$\al_n$}

 \end{picture}
\quad\quad
\begin{picture}(120,30)
\put(0,10){\circle{3}}
\put(1.5,10){\line(1,0){27}}
\put(32,10){\line(1,0){10}}
\put(68,10){\line(1,0){10}}
\put(30,10){\circle{3}}
\put(80,10){\circle{3}}
\put(47,10){$\ldots$}

\put(110,10){\circle{3}}

\put(81,8.5){\line(1,0){24.5}}
\put(81,11.5){\line(1,0){24.5}}
\put(101,7){$>$}

\put(107,14){$\al_m$}

\put(0,14){$\al_1$}

 \end{picture}
\end{center}
In the situation of the right diagram with $m=n$, the  root $\tilde \al_m$ equals $\al_n$.
The mixture coefficient are
$$
c_{\al_{m}}=
\frac{(-1)^{(n - m+1)}}{y_m q^{m + 1}}, \quad  m<n,\quad c_{\al_{m}}=
\frac{-1 }{y_m q^{m}}, \quad  m=n.
$$
The algebra $U_q(\k)$ stabilizes the quantum point (\ref {so(N)}).
\subsection{$\k=\s\p(2n)$}
Consider first the symmetric class of order 4 corresponding to the Hermitian symmetric space.
It is described by the Satake diagram
 \begin{center}
 \begin{picture}(120,30)
\put(0,10){\circle{3}}
\put(1.5,10){\line(1,0){27}}
\put(32,10){\line(1,0){10}}
\put(68,10){\line(1,0){10}}
\put(30,10){\circle{3}}
\put(80,10){\circle{3}}
\put(47,10){$\ldots$}

\put(110,10){\circle{3}}

\put(84,8.5){\line(1,0){24.5}}
\put(84,11.5){\line(1,0){24.5}}
\put(80,7){$<$}

\put(107,14){$\al_n$}

\put(0,14){$\al_1$}

 \end{picture}

\end{center}
For all simple roots, one has $\al=\tilde \al$, and the mixture parameters are
$$
c_{\al_{i}}=-q\frac{y_{i+1}}{y_{i}}, \quad i=1,\ldots, n-1, \quad c_{\al_n}=-\frac{1 }{y_n^2q^{2n}}.
$$
This  $U_q(\k)$ is the stabilizer of (\ref{sp(N)-Herm}).

For non-Hermitian classes (of order 2) the parameter $m$ is even. Then
$$F_{\tilde \al_{2i}}=[[F_{2i}, F_{2i+1}]_{q}, F_{2i-1}]_{q}\ \mathrm{with}\ i = 1,\ldots, \frac{m}{2}-1, \quad \mbox{and }$$
$$
c_{\al_{2i}}=
-\frac{qz_{2i+1}}{z_{2i-1}},\quad \mathrm{if}\  2i<m.
$$
The case of $m\leqslant n-1$ corresponds to the Satake diagrams
 \begin{center}
\begin{picture}(230,30)

\put(0,10){\circle*{3}}
\put(1.5,10){\line(1,0){27}}
\put(30,10){\circle{3}}
\put(31.5,10){\line(1,0){27}}
\put(60,10){\circle*{3}}

\put(62,10){\line(1,0){10}}
\put(77,10){$\ldots$}
\put(98,10){\line(1,0){10}}

\put(110,10){\circle{3}}
\put(111.5,10){\line(1,0){27}}
\put(140,10){\circle*{3}}

\put(142,10){\line(1,0){10}}
\put(157,10){$\ldots$}
\put(178,10){\line(1,0){10}}

\put(190,10){\circle*{3}}
\put(194,8.5){\line(1,0){24.5}}
\put(194,11.5){\line(1,0){24.5}}
\put(190,7){$<$}
\put(220,10){\circle*{3}}

\put(0,14){$\al_1$}
\put(107,14){$\al_m$}
\put(217,14){$\al_n$}

 \end{picture}
\end{center}
Depending on the value of $m$ we define the root vectors
$$F_{\tilde \al_m}=[[\dots[[\dots[[F_{m}, F_{m+1}]_{q}, F_{m+2}]_{q},\dots F_n]_{q^{2}},F_{n-1}]_{q},\dots F_{m+1}]_{q},F_{m-1}]_{q},\quad \mathrm{if}\ m<n-1,$$
$$F_{\tilde \al_{m}}=[[F_{n-1}, F_{n}]_{q^2}, F_{n-2}]_{q},\quad \mathrm{if}\ m=n-1\in 2\Z,$$
with the mixture parameter
$$
c_{\al_{m}}=
 \frac{(-1)^{n+1}}{z_{m-1} q^{m}} , \quad \mathrm{if}\ m\leqslant n-1,\quad
 $$
in both cases.
The situation $m=n$ for even $n$  by encoded in the Satake diagram
 \begin{center}
\begin{picture}(180,30)

\put(0,10){\circle*{3}}
\put(1.5,10){\line(1,0){27}}
\put(30,10){\circle{3}}
\put(31.5,10){\line(1,0){27}}
\put(62,10){\line(1,0){10}}
\put(98,10){\line(1,0){10}}
\put(60,10){\circle*{3}}
\put(110,10){\circle{3}}
\put(77,10){$\ldots$}

\put(111.5,10){\line(1,0){27}}

\put(140,10){\circle*{3}}

\put(144,8.5){\line(1,0){24.5}}
\put(144,11.5){\line(1,0){24.5}}
\put(140,7){$<$}
\put(170,10){\circle{3}}

\put(2,14){$\al_1$}
\put(152,14){$\al_n=\al_m$}


 \end{picture}
\end{center}
The root vector and the mixture parameter are then
$$F_{\tilde \al_n}=[[F_{n}, F_{n-1}]_{q^2}, F_{n-1}], \quad   \mbox{with}\quad
 c_{\al_{n}}=-\frac{1}{(q^2 + 1)q^{2n - 2}z^2_{n-1}}.
$$
The subalgebra   $U_q(\k)$ constructed this way is the stabilizer of (\ref{sp(N)}).

\subsection{$\k=\s\o(2n)$}
There are three Satake diagrams describing even orthogonal symmetric  classes of order 2.
In all cases $\tilde \al_i=\al_i$ if $i<m$, and
$$
c_{\al_{i}}=
-\frac{qy_{i+1}}{y_{i}}
$$
for such $i$. So we have to consider the case $i=m$.

For $m\geqslant n-1$ the root $\tilde \al_m$ is simple.
The case  $m=n$ corresponds to the diagram
\begin{center}
\begin{picture}(300,60)

\put(0,30){\circle{3}}
\put(1.5,30){\line(1,0){27}}

\put(32,30){\line(1,0){10}}
\put(68,30){\line(1,0){10}}
\put(30,30){\circle{3}}
\put(47,30){$\ldots$}
\put(80,30){\circle{3}}
\put(81.5,31.5){\line(3,2){25}}
\put(81.5,28.5){\line(3,-2){25}}
\put(107.5,49){\circle{3}}
\put(107.5,11){\circle{3}}

\put(0,34){$\al_1$}
\put(112,8){$\al_{n-1}$}
\put(112,48){$\al_m=\al_n$}

\put(170,27){$\mbox{with} \quad c_{\al_{m}}=
-\frac{1}{y_{n-1} y_n q^{2n - 1}}$.}

\end{picture}
\quad
\end{center}
The case $m=n-1$ corresponds to the diagram
\begin{center}
 \begin{picture}(300,50)

\put(0,30){\circle{3}}
\put(1.5,30){\line(1,0){27}}

\put(32,30){\line(1,0){10}}
\put(68,30){\line(1,0){10}}
\put(30,30){\circle{3}}
\put(47,30){$\ldots$}
\put(80,30){\circle{3}}
\put(81.5,31.5){\line(3,2){25}}
\put(81.5,28.5){\line(3,-2){25}}
\put(107.5,49){\circle{3}}
\put(107.5,11){\circle{3}}
\qbezier(110,16)(115,30)(110 ,44) \put(111,19){\vector(-1,-3){2}} \put(111,41){\vector(-1,3){2}}

\put(0,34){$\al_1$}
\put(112,8){$\al_m=\al_{n-1}$}
\put(112,48){$\al_n$}

\put(170,27){$\mbox{with} \quad
c_{\tilde \al_m}=-\frac{1}{y_{i-1}q^n}$.}
 \end{picture}
\end{center}
The case $m\leqslant n-2$ is described by the diagram
 \begin{center}
\begin{picture}(200,40)

\put(2,20){\line(1,0){10}}
\put(38,20){\line(1,0){10}}
\put(0,20){\circle{3}}
\put(17,20){$\ldots$}
\put(50,20){\circle{3}}
\put(51.5,20){\line(1,0){27}}
\put(80,20){\circle{3}}
\put(81.5,20){\line(1,0){27}}

\put(112,20){\line(1,0){10}}
\put(148,20){\line(1,0){10}}
\put(110,20){\circle*{3}}
\put(127,20){$\ldots$}
\put(160,20){\circle*{3}}
\put(161.5,21.5){\line(3,2){25}}
\put(161.5,18.5){\line(3,-2){25}}
\put(187.5,39){\circle*{3}}
\put(187.5,1){\circle*{3}}

\put(0,24){$\al_1$}
\put(80,24){$\al_m$}

\put(192,0){$\al_{n-1}$}
\put(192,38){$\al_n$}

 \end{picture}
\end{center}
We define
$$F_{\tilde \al_m}=[\dots[[\dots[[F_{m}, F_{m+1}]_{q}, F_{m+2}]_{q},\dots F_n]_{q},F_{n-2}]_{q},\dots F_{m+1}]_{q}
\quad \mbox{for}\quad m<n-2 \quad\mbox{and}$$
$$F_{\tilde \al_m}=[[F_{n-2}, F_{n-1}]_{q}, F_{n}]_{q}\quad \mbox{for}\quad  m=n-2.$$
In all cases,
$$
c_{\al_{i}}=\frac{(-1)^{n - m}}{y_m q^{m + 1}}.
$$
This completes the description of coideal subalgebras for non-Hermitian symmetric classes. Constructed this way,
$U_q(\k)$  leaves the quantum point  (\ref{so(N)})   fixed.

The Hermitian classes of $SO(2n)$ fall into two families depending on whether $n$ is even or odd.
In all cases we define
$$F_{\tilde \al_i}=[[F_{i}, F_{i+1}]_{ q}, F_{i-1}]_{q},\quad c_{\al_{i}}=
-q\frac{z_{i+1}}{z_{i-1}},
 $$
for  even $i < n-1$.
For even $n$ we have the diagram
  \begin{center}
\begin{picture}(220,40)

 \put(20,20){\circle*{3}}
\put(21.5,20){\line(1,0){27}}
\put(50,20){\circle{3}}
\put(51.5,20){\line(1,0){27}}

\put(82,20){\line(1,0){10}}
\put(118,20){\line(1,0){10}}
\put(80,20){\circle*{3}}
\put(97,20){$\ldots$}

\put(50,20){\circle{3}}
\put(51.5,20){\line(1,0){27}}

\put(130,20){\circle{3}}
\put(131.5,20){\line(1,0){27}}

\put(160,20){\circle*{3}}
\put(161.5,20){\line(1,0){27}}

\put(190,20){\circle{3}}
\put(191.5,21.5){\line(3,2){25}}
\put(191.5,18.5){\line(3,-2){25}}
\put(217.5,39){\circle{3}}
\put(217.5,1){\circle*{3}}

\put(15,24){$\al_1$}

\put(222,0){$\al_{n-1}$}
\put(222,38){$\al_n$}

 \end{picture}
\end{center}
The root $\tilde \al_n=\al_n$ is simple. The mixture parameter is
$
c_{\al_{n}}=-\frac{1}{q^{2n - 3}z_{n-1}^2}
$.

Odd $n$ corresponds to the diagram
\begin{center}
\begin{picture}(220,50)

 \put(20,20){\circle*{3}}
\put(21.5,20){\line(1,0){27}}
\put(50,20){\circle{3}}
\put(51.5,20){\line(1,0){27}}

\put(82,20){\line(1,0){10}}
\put(118,20){\line(1,0){10}}
\put(80,20){\circle*{3}}
\put(97,20){$\ldots$}

\put(50,20){\circle{3}}
\put(51.5,20){\line(1,0){27}}

\put(130,20){\circle*{3}}
\put(131.5,20){\line(1,0){27}}

\put(160,20){\circle{3}}
\put(161.5,20){\line(1,0){27}}

\put(190,20){\circle*{3}}
\put(191.5,21.5){\line(3,2){25}}
\put(191.5,18.5){\line(3,-2){25}}
\put(217.5,39){\circle{3}}
\put(217.5,1){\circle{3}}

\put(15,24){$\al_1$}

\put(222,0){$\al_{n-1}$}
\put(222,38){$\al_n$}

\qbezier(220,6)(225,20)(220 ,34) \put(221,9){\vector(-1,-3){2}} \put(221,31){\vector(-1,3){2}}
 \end{picture}
\end{center}
We define
$$F_{\tilde \al_{n-1}}=[F_{n},F_{n-2}]_{q},\quad
F_{\tilde \al_n}=[F_{n-1},F_{n-2}]_{ q},\quad
\mbox{and} \quad
c_{\al_{n-1}}=-c_{\al_{n}}=
\frac{\sqrt{-1}}{q^{n-1}z_{n-1}}.
$$
Thus we have described $X_\al$ for all $\al$ that are not fixed by $\theta$.
Constructed this way, $U_q(\k)$ fixes the quantum point
(\ref{so(N)-Herm}).

\vspace{20pt}.

\noindent
\underline{\large \bf Acknowledgement.}

\vspace{10pt}
\noindent
This work is done at the Center of Pure Mathematics, MIPT, with
financial support of the project FSMG-2023-0013.

The first author (D.A.) is  thankful to the Deanship of Scientific Research at University of Bisha for the financial support through the Scholarship Program of the University.

\subsection*{Declarations}
\subsubsection*{Data Availability}
 Data sharing not applicable to this article as no datasets were generated or analysed during the current study.

\subsubsection*{Competing interests}
The authors have no competing interests to declare that are relevant to the content of this article.


\begin{thebibliography}{A}
\bibitem{DM0}	Donin, J., Mudrov, A.: {\em Method of quantum characters in equivariant quantization},  Commun. Math. Phys., {\bf 234} (2003), 533-- 555.


\bibitem{Hel} Helgason, S.: Differential Geometry, Lie Groups, and Symmetric Spaces, AMS 2001.

\bibitem{Let} Letzter, G.: {\em Symmetric pairs for quantized enveloping algebras}, J. Algebra {\bf 220} (1999), no. 2, 729--767.

\bibitem{D} Drinfeld,  V.:
   Quantum Groups. In Proc. Int. Congress of Mathematicians,
  Berkeley 1986, Gleason,  A. V.  (eds)  pp. 798--820, AMS, Providence (1987).

\bibitem{ChP} Chari, V. and Pressley, A.:  A guide to quantum groups, Cambridge University Press,
Cambridge 1994.



\bibitem{STS}  M. Semenov-Tian-Shansky: {\em Poisson-Lie Groups,
Quantum Duality Principle, and the Quantum Double},
  Contemp. Math., {\bf 175} (1994) 219--248.

\bibitem{AlM} A. Alekseev and Malkin:
{\em Symplectic structures associated to Lie-Poisson groups}, Comm. Math. Phys. {\bf 162} (1994), 147--173.

\bibitem{M1} Mudrov, A.: {\em Non-Levi closed conjugacy classes of $SP_q(2n)$}, Commun. Math. Phys., {\bf 317} (2013), 317--345.

\bibitem{M2} Mudrov, A.: {\em Non-Levi closed conjugacy classes of $ SO_q (N) $}, J.  Math.
Phys., {\bf 54} (2013), 081701.

\bibitem{KSS} Kulish, P. P., Sasaki, R., Schwiebert, C.:
{\em Constant Solutions of Reflection Equations and Quantum Groups}, J.Math.Phys., J.Math.Phys. {\bf 34}, (1993), 286--304.


\bibitem{Nou} Noumi, M. and Sugitani, T.: {\em Quantum symmetric spaces and related q-orthogonal polynomials}, Group Theoretical Methods in Physics (ICGTMP),  World Sci. Publ., River Edge, NJ,
  (1995), 28–40.



\bibitem{FRT} Faddeev, L.,  Reshetikhin, N., and Takhtajan, L.:
{\em Quantization of Lie groups and Lie algebras},
 Leningrad Math. J., {\bf 1}  (1990), 193--226.


 \bibitem{M5}	Mudrov, A.:
{\em Vector bundles on quantum conjugacy classes}, arXiv:2201.04568.

\bibitem{M4} Mudrov, A.: {\em Quantum conjugacy classes of simple matrix groups}, Commun. Math. Phys., {\bf 272} (2007), 635--660.


\bibitem{GW} Goodman, R., Wallach, N.: Symmetries, Representations, and Invariants, Grad. Texts. in Math. 255, Springer,  New York, 2009.


\bibitem{DS} Donin, J., Shnider, S.: {\em Quantum symmetric spaces},  J. Pure Appl. Algebra {\bf 100} (1995) 103--115.

\bibitem{Di} Dixmier, J.: {\em Enveloping Algebras},  Grad. Stud. in Math. 11, AMS 1996.



\bibitem{M0} Mudrov, A.: {\em Characters of $U_q(\g\l(n))$-reflection equation algebra}, Lett. Math. Phys. {\bf 60}
(2002), 283–291.



\bibitem{KT} Khoroshkin, S.M., and Tolstoy,  V.N.: {\em Universal R-Matrix for Quantized (Super)Algebras},
Commun. Math. Phys. {\bf 141} (1991),  599--617.

\end{thebibliography}
 \end{document}